\def \B {\hat{B}}
\def \d {\hat{d}}
\def \de {\partial}
\def \e {\varepsilon}
\def \G {\mathrm{G}}
\def \H {\mathcal{H}}
\def \HH {\textsc{{H}} }
\def \N {\mathbb{N}}
\def \O {\Omega}
\def \phi {\varphi}
\def \RN {\mathbb{R}^N}
\def \RNu {\mathbb{R}^{N+1}}
\def \R {\mathbb{R}}
\def \l {\lambda}
\def \L {\Lambda}
\def \Ockq {\Omega^c_{kq+i}(z_0)}
\def \Tq {T_{kq+i}}
\def \Tst {T^*_{kq+i}}
\def \P {\mathcal{P}(z,r)}
\def \div {{\rm{div}}}
\newtheorem{theorem}{Theorem}[section]
\newtheorem{lemma}[theorem]{Lemma}
\newtheorem{proposition}[theorem]{Proposition}
\newtheorem{corollary}[theorem]{Corollary}
\newtheorem{remark}[theorem]{Remark}
\theoremstyle{definition}
\numberwithin{equation}{section}
\begin{document}

\title[A Wiener test \`a la Landis for evolutive H\"ormander operators]{A Wiener test \`a la Landis \\for evolutive H\"ormander operators}

\author[G. Tralli]{Giulio Tralli}
\address{Dipartimento d'Ingegneria Civile e Ambientale (DICEA)\\ Universit\`a di Padova\\ Via Marzolo, 9 - 35131 Padova,  Italy.
         }
 \email[Corresponding author]{giulio.tralli@unipd.it}
\author[F. Uguzzoni]{Francesco Uguzzoni}
\address{Dipartimento di Matematica,
         Universit\`{a} degli Studi di Bologna,
         Piazza di Porta S. Donato, 5 - 40126 Bologna, Italy.        
         }
 \email{francesco.uguzzoni@unibo.it}

\subjclass[2010]{35K65, 35H10, 31C15, 31E05.}
\keywords{evolutive H\"ormander operators, boundary regularity, Wiener criterion, potential analysis.}

\date{}

\begin{abstract}
In this paper we prove a Wiener-type characterization of boundary regularity, in the spirit of a classical result by Landis, for a class of evolutive H\"ormander operators. We actually show the validity of our criterion for a larger class of degenerate-parabolic operators with a fundamental solution satisfying suitable two-sided Gaussian bounds. Our condition is expressed in terms of a series of balayages or, (as it turns out to be) equivalently, Riesz-potentials.
\end{abstract}
\maketitle

\section{Introduction}\label{intro}

We are interested in Wiener type criteria of regularity of boundary points for evolutive hypoelliptic operators. The case of the classical heat equation and of uniformly parabolic operators in divergence form has been settled respectively by Evans-Gariepy \cite{EG} and by Garofalo-Lanconelli \cite{GL} (see below for more detailed historical notes). As far as we know, there is no characterization results of Wiener type even for the general H\"ormander model operator
\begin{equation}\label{mod}
\sum_j{X^2_j-\de_t}.
\end{equation}
In such sub-Riemannian settings, the only Evans-Gariepy Wiener criterion is in fact due to Garofalo and Segala in \cite{GS} for the heat equation on the Heisenberg group (see also the recent work in \cite{Ro} dealing with the case of H-type groups). 
On the other hand, the papers \cite{Sc, KLT} deal with Wiener tests of Landis-type for the special class of Kolmogorov equations. In all these papers, the precise knowledge of the fundamental solution plays a crucial role. A different approach has been carried out in \cite{LTU, TUcv} for H\"ormander operators, but the necessary and the sufficient condition for the regularity are different.

In the present paper we prove a characterization result \`a la Wiener-Landis for a class of evolutive operators containing \eqref{mod}. Actually our class contains in particular the operators in the form
\begin{equation}\label{nondivmod}
\sum_{i,j=1}^p{a_{i,j}(z)X_iX_j}  +\sum_{j=1}^p{b_{j}(z)X_j}    -\partial_t,\quad\mbox{ for }z=(x,t)\in D\times]T_1,T_2[,
\end{equation}
where $D\subset\RN$ is bounded and open, the smooth vector fields $\{X_1,\ldots, X_p\}$ satisfy the H\"ormander rank condition in a bounded open set $D_0\supset \supset D $, $a_{i,j}, b_{j}$ are smooth functions in $D_0\times]T_1,T_2[$, and the matrix $(a_{i,j}(\cdot))_{i,j}$ is symmetric and uniformly positive definite. H\"{o}rmander-type operators arise in many theoretical and applied settings sharing a sub-Riemannian underlying geometry, for instance in mathematical models for finance, control theory, geometric measure theory, pseudohermitian and CR geometry.

\noindent Relatively to operators in \eqref{nondivmod}, our main result (Theorem \ref{iff} below) reads as follows:
\begin{center} if $\Omega$ is a bounded open set which is compactly contained in $D\times]T_1,T_2[$, and $z_0\in\partial\Omega$, then
\begin{equation}\label{iffHo}
z_0 \mbox{ is } \H\mbox{-regular for }\de \Omega \qquad\Longleftrightarrow\qquad\sum_{k=1}^{\infty}{V_{\Omega^c_k(z_0)}(z_0)}=+\infty.
\end{equation}
\end{center}
Here, $V_{\Omega^c_k(z_0)}$ denotes the balayage of some compact sets $\Omega^c_k(z_0)$ involving suitable level sets of the fundamental solution of the operator $\H$ under consideration (see the following sections for the precise definitions).

Even for the heat operator, Wiener-type characterizations have a long history. To the best of our knowledge, the first attempt in this direction is due to Pini in \cite{Pi} where he proved a sufficient condition in the $1$-dimensional case for particular open sets. Then, in \cite{La} Landis proved a characterization for the regularity in terms of a suitable series of caloric potentials. Concerning the proper analogue of the classical Wiener criterion for the heat equation, Lanconelli proved in \cite{L73} the necessary condition for the regularity and, finally, Evans and Gariepy proved the full characterization in \cite{EG}.\\
It is well-known that all the elliptic operators share the same regular points with the Laplacian, whereas Petrowski showed in \cite{Pe} explicit counterexamples of this fact even for constant coefficients parabolic operators. This feature makes more interesting the study of the variable coefficients case. Several necessary and sufficient conditions have been investigated for classical parabolic operators both in divergence and non divergence form, also with different degree of regularity for the coefficients (see, e.g., \cite{La, Nov, L77} and references therein). The Evans-Gariepy Wiener test was extended to parabolic operators in divergence form with smooth variable coefficients by Garofalo and Lanconelli in \cite{GL}, and with $C^1$-Dini continuous coefficients by Fabes-Garofalo-Lanconelli in \cite{FGL}. We also mention \cite{KL, KKKP, BBG, AKN} (and references therein) for some recent developments in quasilinear parabolic settings.

We now turn back to the sub-Riemannian setting in order to put our result in perspective with respect to the state of the art already mentioned. In \cite{LTU, TUcv} we found necessary and sufficient conditions (different from each other) which are uniform in the class of operators \eqref{nondivmod}. Such conditions were expressed in terms of a series of capacities of compact sets involving only the underlying metric, whereas in the true characterization \eqref{iffHo} of the present paper we express the condition with balayages of super-level sets of the fundamental solution $\Gamma(\cdot,\cdot)$ of each operator $\H$ in the class. To do this we follow an approach which is more in the spirit of \cite{KLT}. One of the thorny issues of this strategy is to choose appropriately subregions of $\Omega^c_k(z_0)$ where we can estimate uniformly the ratio $\frac{\Gamma(z,\zeta)}{\Gamma(z_0,\zeta)}$. In contrast with the homogeneous Kolmogorov case in \cite{KLT}, we have to face additional difficulties such as the lack of an explicit knowledge of the fundamental solution and the lack of good scaling properties for the operators. Another problem we have faced in pursuing this strategy is the identification of the balayages with their Riesz representatives. Indeed, while the almost everywhere identification is quite straightforward, \emph{everywhere} identification seems to be a delicate point. One can approach such a Riesz representation theorem by making use of mean value formulas: for operators as in \eqref{nondivmod} the kernel in the mean value formulas may change sign and a careful analysis is in order.

It turns out that in our approach we can use essentially only two-sided Gaussian estimates for $\Gamma$ with respect to a well-behaved distance. For this reason we decided to present the results for a more general class of diffusion operators by using an axiomatic approach in the spirit of \cite{LU, LTU}. In the following subsection, we proceed by fixing precisely the class of operators under consideration.

\subsection{Assumption and main results}\label{subsec11}

Let us consider the following linear second order Partial Differential Operators
\begin{equation}\label{IHG}
\H=\sum_{i,j=1}^N q_{i,j}(z)\de^2_{x_i,x_j} +\sum_{k=1}^N q_{k}(z)\de_{x_k}
-\partial_t ,
 \end{equation}
in the strip of $\R^{N+1}$
$$ S=\{ z=(x,t)\, :\, x\in\RN,\ T_1<t<T_2\},\quad -\infty\leq T_1<T_2\leq \infty.$$
We assume the coefficients $q_{i,j}=q_{j,i}, q_k$
of class $C^\infty$, and the characteristic form
$$q_\H(z,\xi)=\sum_{i,j=1}^N q_{i,j}(z)\xi_i\xi_j,\quad \xi=(\xi_1,\ldots,\xi_N)\in\RN,$$
nonnegative definite and not totally degenerate, i.e.,
$q_\H(z,\cdot)\geq 0,\ q_\H(z,\cdot)\not\equiv 0$ for every $z\in S$. We also assume the {\em hypoellipticity} of $\H$ and of its adjoint $\H^*$, and the existence of a global {\em fundamental solution}
$$(z,\zeta )\mapsto \Gamma (z,\zeta )$$
smooth out of the diagonal of $S\times S$ satisfying the following:
\begin{enumerate}
\item[(i)] $\Gamma (\cdot,\zeta )\in  L^1_{\rm {loc}}(S)$ and $\H(\Gamma (\cdot,\zeta ))=-\delta _\zeta$, the Dirac measure at
 $\{\zeta\}$, for every $\zeta \in S$; $\Gamma (z,\cdot )\in  L^1_{\rm {loc}}(S)$ and $\H^*(\Gamma (z,\cdot ))=-\delta _z$ for every $z \in S$;
\item[(ii)] for every compactly supported continuous function $\phi$ on $\RN$ and for every $x_0\in\RN$, we have
\begin{equation}\label{Hbissa}
\int_{\RN}{\Gamma(x,t,\xi,\tau)\,\varphi(\xi)\,{\rm d}\xi}\to\phi(x_0)
\end{equation}
as $x\to x_0$, $t\searrow\tau\in]T_1,T_2[$ and also as $x\to x_0$, $\tau\nearrow t\in]T_1,T_2[$;
\item[(iii)] there exists a distance $d$ in $\RN$ verifying the properties (\hyperref[diuno]{D1})--(\hyperref[ditre]{D3}) below, and there exist constants $0<a_0\leq b_0$ and $\Lambda\geq 1$ such that the following Gaussian estimates hold
\begin{equation}\label{bounds}
\frac{1}{\Lambda} \G_{b_0}(z,\zeta)\leq \Gamma(z,\zeta)\leq\Lambda\G_{a_0}(z,\zeta),\quad \forall z,\zeta\in S.
\end{equation}
Hereafter, we denote by $\G_{a}$ the function
$$\G_a(z,\zeta)=\G_a(x,t,\xi,\tau)=
\begin{cases}
0 & \text{ if }t\le\tau,\\
\frac{1}{|B_d(x,\sqrt{t-\tau})|} \exp\left(-a \frac{d(x,\xi)^2}{t-\tau}\right) &  \text{ if }t>\tau.
\end{cases}
$$
\end{enumerate}

\begin{remark}\label{r12}
In particular, condition (ii) holds true if $\int_{\RN}{\Gamma(x,t,\xi,\tau)\,{\rm d}\xi}= 1$ (for any fixed $x$ and $t>\tau$) and (iii) is satisfied (see Remark \ref{eqass} below).
\end{remark} 

We fix here the notations we have just used. If $A\subseteq\RN$ ($A\subseteq\mathbb{R}^{N+1}$), $|A|$ denotes the $N$-dimensional ($(N+1)$-dimensional) Lebesgue measure of $A$. Moreover, we denote the $d$-ball of center $x$ and radius $r>0$ as
$$B_d(x,r)=B(x,r)=\{y\in\RN\,:\,d(x,y)<r\}.$$ 
Finally, we shall make the following assumptions on the metric space ($\R^N, d$):
\begin{enumerate}
\item[(D1)]\label{diuno} The $d$-topology is the Euclidean topology. Moreover $(\RN,d)$ is complete and, for every fixed $x\in\RN$, $d(x,\xi)\to\infty$ if (and only if) $\xi\to\infty$ with respect to the usual Euclidean norm.
\item[(D2)]\label{didue} $(\RN,d)$ is a {\em doubling metric space} w.r.t. the Lebesgue measure, i.e. there exists a constant $c_d>1$ such that
$$ |B(x,2r)|\le c_d |B(x,r)|, \quad \forall x\in\RN,\ \forall r>0.$$
We will always denote by $Q=\log_2{c_d}$ the relative homogeneous dimension.
\item[(D3)]\label{ditre} $(\RN,d)$ has the {\em segment property}, i.e., for every $x,y\in\RN$ there exists a continuous path $\gamma: [0,1]\to\RN$ such that $\gamma(0)=x$, $\gamma(1)=y$ and
$$d(x,y)=d(x,\gamma(t))+d(\gamma(t),y)\quad\forall t\in [0,1].$$
\end{enumerate}
\begin{remark}
Global Gaussian estimates as in \eqref{bounds} for the hypoelliptic operators of H\"ormander-type in \eqref{nondivmod} have been proved in \cite{BUt, BBLU}. More precisely, such estimates are obtained for an extended operator (outside $D\times]T_1,T_2[$) with respect to a Carnot-Carath\'eodory metric satisfying (\hyperref[diuno]{D1})--(\hyperref[ditre]{D3}). Properties $(i)-(ii)$ follow as well from the results in \cite{BBLU} (see also \cite{LP99} and Remark \ref{r12}).\\
This is the reason why we can apply our results to the class \eqref{nondivmod}, provided that we consider the relevant bounded open sets $\Omega$ to be compactly contained in $D\times]T_1,T_2[$ (see \eqref{iffHo}).
\end{remark}

Under the above assumptions the operator $\H$ endows the strip $S$ with a structure of $\beta$-harmonic space satisfying the Doob convergence property, see \cite[Theorem 3.9]{LU}. As a consequence, for any bounded open set $\O$ with $\overline{\O}\subseteq S$, the Dirichlet problem
$$\begin{cases}
\H u= 0 \text{ in }\Omega,  \\
u|_{\de\Omega}=\phi
\end{cases}$$
has a generalized solution $H_\phi^\Omega$, in the Perron-Wiener sense, for every continuous function $\phi:\de\Omega\rightarrow\R$. A point $z_0\in\de\O$ is called $\H$-regular if $\lim_{z\rightarrow z_0}{H_\phi^\Omega(z)}=\phi(z_0)$ for every $\phi\in C(\de\O)$. The main result of this paper is the following Wiener-Landis test for the $\H$-regularity of the boundary points of $\O$.

If $z_0\in\partial\Omega$ and $\lambda\in ]0,1[$ are fixed, we define for $k\in\N$
\begin{equation}\label{defOmk}
\O_{k}^c(z_0)=\left\{z \in S\smallsetminus\O \,:\,\left(\frac{1}{\l}\right)^{k\log{k}}\leq\Gamma(z_0,z) \leq\left(\frac{1}{\l}\right)^{(k+1)\log{(k+1)}}\right\} \cup \{z_0\}.
\end{equation}
\begin{theorem}\label{iff} Let $\Omega$ be a bounded open set with $\overline{\O}\subseteq S$, and let $z_0\in\partial\Omega.$ Then $z_0$ is $\H$-regular for $\de \Omega$  if and only if
\begin{equation} \label{iffeqn} \sum_{k=1}^{\infty}{V_{\Omega^c_k(z_0)}(z_0)}=+\infty.\end{equation} 
\end{theorem}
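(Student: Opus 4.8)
The plan is to establish the two implications separately, following the Landis-type strategy adapted to the potential-theoretic framework of $\beta$-harmonic spaces. The key quantitative object is the balayage $V_{\Omega^c_k(z_0)}$ of the compact "annular" set $\Omega^c_k(z_0)$, and the whole argument hinges on comparing, for $z$ in a well-chosen sub-annulus of $\Omega^c_k(z_0)$, the value $\Gamma(z_0,z)$ with the balayage evaluated at $z_0$. The definition \eqref{defOmk} uses the super-level sets $\{\l^{-k\log k}\le\Gamma(z_0,\cdot)\le\l^{-(k+1)\log(k+1)}\}$; by the Gaussian bounds \eqref{bounds} these level sets are sandwiched between $d$-parabolic annuli, so $\Omega^c_k(z_0)$ is comparable to an annulus $\mathcal{P}(z_0,r_{k+1})\smallsetminus\mathcal{P}(z_0,r_k)$ in the parabolic cylinder scale, with $r_k$ tied to $\l^{k\log k}$. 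The superlinear growth of the exponent $k\log k$ (rather than $k$) is precisely what makes the radii $r_k$ shrink geometrically fast enough that the series of balayages is the right regularity gauge — this is the Landis choice.

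First, I would set up the Wiener-type test machinery: recall that $z_0$ is $\H$-regular if and only if $z_0$ is not a "thin" point of $S\smallsetminus\Omega$ at scale $z_0$, which in a $\beta$-harmonic space is detected by whether $\lim_{z\to z_0} V_{S\smallsetminus\Omega}(z) = 1$, equivalently by the non-thinness of the complement. The classical device is to decompose $S\smallsetminus\Omega$ (localized near $z_0$) into the annuli $\Omega^c_k(z_0)$ and to show that thinness of the union is controlled by $\sum_k V_{\Omega^c_k(z_0)}(z_0)$. For the \textbf{sufficiency} direction (series diverges $\Rightarrow$ regular): using the estimate $V_{\Omega^c_k(z_0)}(z_0)\le C\, r_k^{\text{something}}$ won't be enough; instead one shows that $1 - \lim_{z\to z_0} V_{\bigcup_{j\ge k}\Omega^c_j(z_0)}(z_0)$ decays once the partial sums of the balayages grow without bound, via a sub-additivity-plus-iteration estimate of the form $V_{\bigcup_{j\ge k}}(z_0) \ge c\sum_{j\ge k} V_{\Omega^c_j(z_0)}(z_0)\big/\big(1+\sum\big)$ — a Borel–Cantelli-flavoured lemma for balayages. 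The crucial input here is a lower bound $\inf_{z\in\Omega^c_k(z_0)} \Gamma(z,z_0) \ge c\, \l^{-k\log k}$ together with a Harnack-type comparison on the sub-annulus; combined with the defining upper bound $\Gamma(z_0,z)\le \l^{-(k+1)\log(k+1)}$ this gives a two-sided control $\Gamma(z,z_0)\asymp \Gamma(z_0,z)$ on a fixed proportion of the annulus, which is exactly the "uniform ratio $\Gamma(z,\zeta)/\Gamma(z_0,\zeta)$" point flagged in the introduction.

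For the \textbf{necessity} direction (regular $\Rightarrow$ series diverges), I would argue by contraposition: if $\sum_k V_{\Omega^c_k(z_0)}(z_0) < \infty$, build an explicit superharmonic barrier obstructing regularity, namely $w = \sum_k V_{\Omega^c_k(z_0)}$, which is an $\H^*$-potential (a Riesz potential, by the identification with its Riesz representative discussed in the introduction), finite at $z_0$, dominating $1$ on a neighbourhood of $z_0$ intersected with $S\smallsetminus\Omega$ away from $z_0$ itself but with $w(z_0)$ strictly controlled — this shows $S\smallsetminus\Omega$ is thin at $z_0$, hence $z_0$ is irregular. The delicate point, as the authors themselves signal, is the \emph{everywhere} (not just a.e.) identification of each balayage $V_{\Omega^c_k(z_0)}$ with its Riesz potential $\int \Gamma(\cdot,\zeta)\,d\mu_k(\zeta)$, which is what makes the pointwise evaluation at $z_0$ meaningful and lets one sum; I would handle this through mean-value formulas for $\H$, carefully tracking the sign change of the mean-value kernel, and through continuity/lower-semicontinuity properties of balayages in the $\beta$-harmonic space. \textbf{The main obstacle} I anticipate is precisely this interplay: getting the uniform two-sided bound on $\Gamma(z,\zeta)/\Gamma(z_0,\zeta)$ for $\zeta$ near $z_0$ and $z$ ranging over the correct sub-region of $\Omega^c_k(z_0)$, in the absence of exact scaling and of an explicit fundamental solution — one must squeeze this out of the Gaussian bounds \eqref{bounds} alone, using the doubling property (D2) and the segment property (D3) to propagate Harnack chains along $d$-geodesics, and the superlinear exponent $k\log k$ to absorb the multiplicative errors $\Lambda$ and $c_d$ that accumulate across scales.
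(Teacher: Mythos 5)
Your overall skeleton (everywhere Riesz identification of the balayage proved through mean-value formulas with a sign-changing kernel, a KLT-type iteration to get $V_{\O'_r(z_0)}(z_0)$ bounded below, and the characterization of regularity through the balayage of $\O'_r(z_0)$) does match the paper, but the proposal leaves the genuinely hard step unproved and, as stated, aims at an estimate that is false. For the sufficiency you want a ``two-sided control $\Gamma(z,z_0)\asymp\Gamma(z_0,z)$ on a fixed proportion of the annulus'', obtained by Harnack chains along $d$-geodesics; what is actually needed (and what feeds the iteration of \cite[Lemma 6.1]{KLT}) is a bound $\Gamma(z,\zeta)\le M_0\,\Gamma(z_0,\zeta)$ \emph{uniform in $h\ne k$} for $z$ in one piece and $\zeta$ in another, and such a bound cannot hold on the full annuli $\O^c_k(z_0)$ nor for adjacent indices: when $\zeta\in\O^c_k(z_0)$ is too close to $z_0$ (in time) the ratio is not uniformly controlled. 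This is precisely why the paper (a) passes to an arithmetic subsequence $kq+i$ with $q$ fixed in \eqref{defq} in terms of the H\"older exponent $\beta$, $c_d$, $\Lambda$, $a_0$, (b) cuts each $\O^c_{kq+i}(z_0)$ at the time level $T^*_{kq+i}$ of Lemma \ref{findtstar} into $F^{0,i}_k\cup F^i_k$ and proves $\sum_k V_{F^{0,i}_k}(z_0)<\infty$ (Lemma \ref{sommadeglialtri}) so the top pieces can be discarded, and (c) proves the ratio bound only for $z\in F^i_h$, $\zeta\in F^i_k$, $h\neq k$, by applying the interior H\"older estimate \eqref{Hest} to $v_\zeta=\Gamma(\cdot,\zeta)/\Gamma(z_0,\zeta)$ (which equals $1$ at $z_0$) in a cylinder of radius $r_k^2=T^*_{kq+i}/5$ — not by Harnack chains — the choices of $q$ and $p$ being exactly what makes the oscillation term in \eqref{finallybounded} bounded uniformly in $k$. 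Note also that the superlinear exponent $k\log k$ enters to make the discarded series $\sum_k V_{F^{0,i}_k}(z_0)$ and the bound in Lemma \ref{sommadeglialtri} converge, rather than to ``absorb multiplicative errors across scales''. None of this construction appears in your plan, and without it the Borel--Cantelli-type lemma has no valid input.

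For the necessity, your barrier $w=\sum_k V_{\O^c_k(z_0)}$ does not majorize $1$ on all of $(S\smallsetminus\O)\cap\B(z_0,r)\cap\{t\le t_0\}$: arbitrarily close to $z_0$ there are complement points where $\Gamma(z_0,\cdot)$ is small (below $\l^{-L\log L}$ for any fixed $L$), which lie in no high-index annulus, so thinness does not follow from finiteness of the series alone as you claim. The paper's argument splits $\O'_r(z_0)=\O^L_r\cup\O^{*L}_r$, controls $V_{\O^L_r}(z_0)$ by the tail $\sum_{k\ge L}V_{\O^c_k(z_0)}(z_0)\le\e$, and then bounds $V_{\O^{*L}_r}(z_0)$ by using the \emph{everywhere} representation of Theorem \ref{reprev} at the point $z_0\in\de\O^{*L}_r$ together with the capacity--measure estimate $\mu_{\O^{*L}_r}(\O^{*L}_r)\le c\,|B(x_0,r)|$, obtaining $V_{\O'_r(z_0)}(z_0)<\e+\tfrac12<1$ for small $r$ and hence irregularity by \cite[Theorem 4.6]{LU}. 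This quantitative treatment of the low-potential portion is missing from your sketch and is not a cosmetic detail: it is the only place where the finiteness hypothesis is converted into a verifiable failure of the regularity criterion.
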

Here and in what follows, if $F$ is a compact subset of $\RNu$, $V_F$ will denote the {\it $\H$-balayage} of $F$ (see Section \ref{sec2} below for details).
\begin{remark}Thanks to Theorem \ref{reprev} below, we can write \eqref{iffeqn} as
$$\sum_{k=1}^{\infty}{\Gamma\ast\mu_{\Omega^c_k(z_0)}(z_0)}=+\infty.$$
\end{remark}

\begin{remark}\label{remarkall}
We would like to comment on the choice of the exponent $\alpha(k)=k\log{k}$ in the definition \eqref{defOmk} of $\Omega^c_k(z_0)$. The superlinear growth of $\alpha(k)$ is crucial for our proof. On the other hand, the exact analogue of the Evans-Gariepy criterion would have required the sequence of level sets with $\alpha(k)=k$. This is why Theorem \ref{iff} is a Wiener criterion `\`a la Landis', who proved in \cite{La} a similar result for the heat equation with a suitable choice of $\alpha(k)$ growing fast at infinity. Here, we don't use the strategy of Landis. We use instead, as we mentioned, the strategy in \cite{KLT} which takes ideas from \cite{L73, L77}. In \cite{KLT} it appears the same choice $\alpha(k)=k\log{k}$ as in Theorem \ref{iff}. We feel it is interesting to remark that we can get the same accuracy in the result in the present situation (not without an additional effort) where we know just two-sided Gaussian bounds on $\Gamma$ (and not an explicit expression). In this respect, we mention that in \cite{GL} the authors were able to prove the Evans-Gariepy-Wiener criterion in the case of smooth uniformly parabolic operators in divergence form for which the fundamental solution is not explicit: they were able to treat such a case by making crucial use of a refined Gaussian expansion of the fundamental solution in terms of the underlying geodesic Riemannian distance. A sub-Riemannian analogue of this noteworthy expansion is currently not available (to the best of our knowledge) for equations as in \eqref{mod}.
\end{remark}

\noindent{\bf Plan of the paper}. In Section \ref{sec2} we introduce suitable mean-value operators and we prove the everywhere identification of the balayages with the Riesz potentials. As a intermediate step we also prove a reproduction formula for the fundamental solution $\Gamma$. In Section \ref{sec3} we prove first the sufficient and then the necessary condition for the $\H$-regularity in Theorem \ref{iff}. To this aim, the crucial bound for the ratio $\frac{\Gamma(z,\zeta)}{\Gamma(z_0,\zeta)}$ is performed via H\"older-type estimates in Lemma \ref{rationew}, where $z, \zeta$ move in special subregions of $\O_{k}^c(z_0)$. The construction of such regions, denoted by $F^i_k$, and the proof of their needed properties are quite delicate (see \eqref{defFk}, see also Lemma \ref{findtstar} and \ref{sommadeglialtri}) and take a big part of Section \ref{sec3}. In Section \ref{sec4} we provide in Corollary \ref{infondo} a necessary and a sufficient condition for $\H$-regularity (different from each other) involving a suitable capacity of the compact sets $\Omega^c_k(z_0)$, and we then deduce a regularity criterion in terms of the Lebesgue measure of $\Omega^c_k(z_0)$ in Corollary \ref{infondobis}. Finally, we apply such regularity test to the model case of heat operators in Carnot groups by establishing in Corollary \ref{infondotris} a sharp geometric criterion for the regularity under an exterior $(\log\log)$-paraboloid condition.

\section{Balayages as potentials}\label{sec2}

The hypotheses mentioned in the Introduction allow in particular to exploit the results in \cite{LU}. For example, to our purposes, it is crucial the notion of balayage which yields various characterizations of the regularity of boundary points (see, e.g., \cite[Theorem 4.6]{LU}). We recall here the definition for the reader's convenience, together with other related notions of classical potential theory.

If $O\subseteq S$ is an open set, we say that a function $u:O\to ]-\infty,\infty]$ is $\H$-superharmonic in $O$ if $u$ is lower semi-continuous, it is finite in a dense subset of $O$, and
$$u\geq H^V_\varphi  \text{ in }V\,\,\,\,\forall \phi\in C(\partial V)\,\,\mbox{ with }\, \varphi\leq u|_{\partial V}$$
and for every $\H$-regular open set $V$ compactly contained in $O$. A bounded open set $V$ is called $\H$-regular if we can solve in a classical sense the Dirichlet problem related to $\H$ in $V$ for any continuous boundary datum (such $\H$-regular sets form a basis for the Euclidean topology). We use the notations $\overline{\HH}(O)$ for the set of $\H$-superharmonic functions in $O$. For a given a compact set $F\subseteq S$, we denote $W_F=\inf\{v\in\overline{\HH}(S)\,:\,v\ge 0 \text{ in }S,\ v\ge 1 \text{ in }F\}$ and we define the ($\H$-){\em{balayage}} potential of $F$ as
\begin{equation}\label{defbal}
V_F(z)=\liminf_{\zeta\to z}W_F(\zeta),\qquad z\in S.
\end{equation}
Here and in what follows we agree to let $\liminf_{\zeta\to z}w(\zeta)=\sup_{V\in\mathcal{U}_z}(\inf_{V} w)$ being $\mathcal{U}_z$ a basis of neighborhoods of $z$. We know from \cite[Proposition 8.3]{LU} that
\begin{equation}\label{quasidap}
V_F(z)=\Gamma\ast\mu_F (z)\quad\mbox{ for almost every point }z\in S \,\,\,(\mbox{and everywhere in }S\smallsetminus\de F),
\end{equation}
where $\mu_F$ denotes the Riesz-measure of $V_F$, i.e. the unique Radon measure in $S$ such that $\H V_F=-\mu_F$ in the sense of distributions. We recall that $\mu_F$ is a nonnegative measure with support in $F$. In this work we are going to prove that the equality \eqref{quasidap} holds at \emph{every} point of $S$. The validity of such representation in $\de F$ will be in fact crucial in the proof of Theorem \ref{iff}.
\begin{theorem}\label{reprev}
We have
$$V_F(z)=\Gamma\ast\mu_F (z)\quad \mbox{ for \emph{every} }z\in S\mbox{ (not only almost everywhere)}.$$
\end{theorem}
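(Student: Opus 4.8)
The plan is to deduce Theorem~\ref{reprev} from a pointwise mean-value characterization of $\mathcal{H}$-superharmonic functions; the substantive part will be the construction of mean-value operators adapted to the fundamental solution $\Gamma$, which here is neither explicit nor dilation invariant. For $z\in S$ and small $r>0$ put $\Omega_r(z)=\{\zeta\in S:\Gamma(z,\zeta)>1/r\}$. By the Gaussian estimates \eqref{bounds} and the properties (D1)--(D3), $\Omega_r(z)$ is a bounded open set, playing the role of a heat ball centered at $z$, with $z\in\partial\Omega_r(z)$ and $\bigcap_{r>0}\overline{\Omega_r(z)}=\{z\}$. Applying Green's identity for the pair $\mathcal{H},\mathcal{H}^*$ on the annular region $\Omega_r(z)\setminus\overline{\Omega_\rho(z)}$ (with $0<\rho<r$), using $\mathcal{H}\Gamma(\cdot,\zeta)=-\delta_\zeta$ and $\mathcal{H}^*\Gamma(z,\cdot)=-\delta_z$, and letting $\rho\to0^+$, I would introduce a surface mean-value operator $M_r(\cdot)(z)$ over $\partial\Omega_r(z)$ --- normalized so as to fix constants and to reproduce $\mathcal{H}$-harmonic functions --- together with the \emph{reproduction formula} for $\Gamma$ (the intermediate step announced in the plan of the paper):
$$M_r\big(\Gamma(\cdot,\zeta)\big)(z)=\min\{\Gamma(z,\zeta),\,1/r\}\qquad\text{for every }\zeta\in S.$$
Integrating in $r$ against a suitable weight, I would then pass to a solid version of the form $\widetilde M_R u(z)=\int_S u(\zeta)\,k_R(z,\zeta)\,\mathrm{d}\zeta$, with a fixed kernel $k_R(z,\cdot)\in L^1(S)$ supported in $\overline{\Omega_R(z)}$.

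Two properties are needed. First, $\widetilde M_R(\cdot)(z)$ is a single absolutely convergent volume integral, so if $u=v$ a.e.\ on $S$ then $\widetilde M_R u(z)=\widetilde M_R v(z)$ for every $z$ and $R$ --- note that no sign information on $k_R$ enters here. Second, if $u\in\overline{\HH}(S)$ then $\widetilde M_R u(z)\to u(z)$ as $R\to0^+$: on an $\mathcal{H}$-regular neighborhood $V$ of $z$ use the Riesz decomposition $u=\Gamma*\nu+h$, with $\nu\ge0$ supported in $V$ and $h$ $\mathcal{H}$-harmonic in $V$; then $M_r h(z)=h(z)$, while by Fubini and the reproduction formula $M_r(\Gamma*\nu)(z)=\int_S\min\{\Gamma(z,\zeta),1/r\}\,\mathrm{d}\nu(\zeta)$, which increases to $\int_S\Gamma(z,\zeta)\,\mathrm{d}\nu(\zeta)$ as $r\to0^+$ by monotone convergence; hence $M_r u(z)\uparrow u(z)$, and averaging in $r$ gives the same for $\widetilde M_R$.

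Granting these, Theorem~\ref{reprev} follows at once. Fix $z_0\in S$. If $z_0\notin\partial F$ the identity is already contained in \eqref{quasidap}, so assume $z_0\in\partial F$. Both $V_F$ and $\Gamma*\mu_F$ belong to $\overline{\HH}(S)$ --- the former because it is a balayage, the latter because $\mu_F\ge0$ is a finite measure (being the Riesz measure of the bounded $\mathcal{H}$-superharmonic function $V_F$) --- and by \eqref{quasidap} they coincide a.e.\ on $S$. Therefore $\widetilde M_R V_F(z_0)=\widetilde M_R(\Gamma*\mu_F)(z_0)$ for all small $R>0$ by the first property; letting $R\to0^+$ and invoking the second property on each side gives $V_F(z_0)=\Gamma*\mu_F(z_0)$.

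The main obstacle is the construction in the first paragraph: producing mean-value operators that satisfy the reproduction formula without an explicit expression for $\Gamma$ and without scaling invariance, and, since $\mathcal{H}$ is degenerate parabolic, controlling the kernels defining $M_r$ --- built from the conormal derivative of $\Gamma(z,\cdot)$ along the level surface $\{\Gamma(z,\cdot)=1/r\}$ --- which need not be sign-definite. This calls for a hands-on analysis of the surface and solid integrals, relying on the two-sided bounds \eqref{bounds}, the coarea formula, and Sard's theorem (to ensure $\{\Gamma(z,\cdot)=1/r\}$ is a regular hypersurface for a.e.\ $r$); by contrast, the semicontinuity of the two potentials, the Riesz decomposition, and the final limiting argument are routine.
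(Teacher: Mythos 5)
Your overall scheme (smooth both $V_F$ and $\Gamma\ast\mu_F$ with one fixed volume-integral operator, use the a.e.\ identity \eqref{quasidap} to equate the smoothed versions, then let the smoothing parameter go to $0$ at the fixed point) is sound in outline, but the two claims you defer to the ``construction'' step are exactly the heart of the matter, and as stated they are not available in this setting. Because $\mathcal{H}$ carries the drift $Y$, the Green-identity computation on $\Omega_r(z)=\{\Gamma(z,\cdot)>1/r\}$ does \emph{not} yield a pure surface mean reproducing $\mathcal{H}$-harmonic functions: an interior term with density proportional to $\div(Y)\log\bigl(r\Gamma(z,\cdot)\bigr)$ survives (this is the operator $M^2_r$ in \eqref{mean}), and no renormalization of a surface density can absorb it, since $z$ sits on the parabolic boundary of $\Omega_r(z)$, so the only surface representation of harmonic functions at $z$ is the trivial one. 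Hence any operator that does reproduce harmonic functions exactly must contain a sign-changing solid correction, and then your exact identity $M_r\bigl(\Gamma(\cdot,\zeta)\bigr)(z)=\min\{\Gamma(z,\zeta),1/r\}$ is lost: for instance, a formal application of \eqref{mean} to $u=\Gamma(\cdot,\zeta)$ in the drift-free case gives $M_r\bigl(\Gamma(\cdot,\zeta)\bigr)(z)=\bigl(1+\log(r\Gamma(z,\zeta))\bigr)/r$ on $\{\Gamma(z,\zeta)>1/r\}$, not $1/r$. (Note also that what the paper calls the ``reproduction formula'' is the Chapman--Kolmogorov identity of Proposition \ref{repro}, a different statement from yours, and it is needed for a different purpose.)

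Consequently your key property that $\widetilde M_R u(z)\to u(z)$ for \emph{every} $u\in\overline{\HH}(S)$ is unproved, and its sketch has further gaps: the Fubini step $M_r(\Gamma\ast\nu)(z)=\int\min\{\Gamma(z,\zeta),1/r\}\,{\rm d}\nu(\zeta)$ requires absolute integrability of a sign-changing kernel against $\Gamma(\cdot,\zeta)$, i.e.\ an estimate of the type \eqref{mdueabs}, whose proof in the paper is exactly where Proposition \ref{repro} enters; and applying a surface mean to a general superharmonic $u$ (merely l.s.c.\ and locally integrable) is not justified. The paper deliberately avoids proving any such two-sided convergence of means: it establishes only the one-sided bound $N_r(\Gamma\ast\mu_F)(z)\le(1+\delta(r))\,\Gamma\ast\mu_F(z)$ (claim \eqref{cllaim}, via Proposition \ref{repro}, Fubini, and a monotone $C^2$ approximation with $\mathcal{H}u_k\le 0$), combines it with the lower semicontinuity of $V_F$, $M_r(1)=1$ and the a.e.\ identity to get $V_F\le\Gamma\ast\mu_F$ everywhere, and obtains the reverse inequality by a minimum-principle argument directly from the definition of the balayage, with no mean values at all. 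Your symmetric plan would require a genuinely stronger convergence theorem for the means on superharmonic functions, which is precisely what the sign-changing kernel obstructs; without it, the proposal does not close.
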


In the proof of this result we use \emph{mean-value representation formulas} for $C^2$-functions. To this aim, let us write the operator in the following form
$$\H=\div_x\left(Q(x,t)\nabla_x\right) + Y -\de_t.$$
For any $r>0$ and $z\in S$, we introduce the following mean-value operator
\begin{eqnarray*}
&& M_r u(z)= M^1_r u(z) + M^2_r u(z) = \int_{\P}{ E^1_r(z,\zeta) u(\zeta)\,{\rm{d}}\zeta} + \int_{\P}{ E^2_r(z,\zeta) u(\zeta)\,{\rm{d}}\zeta}= \\
&& \frac{1}{r}\int_{\P}{\Gamma^{-2}(z,\zeta)\left\langle Q(\zeta)\nabla_\xi\Gamma(z,\zeta),\nabla_\xi\Gamma(z,\zeta) \right\rangle u(\zeta)\,{\rm{d}}\zeta} + \frac{1}{r}\int_{\P}{\div(Y)(\zeta)\log{\left(r\Gamma(z,\zeta)\right)} u(\zeta)\,{\rm{d}}\zeta}
\end{eqnarray*}
where
$$\P=\left\{\zeta \in S\,:\,\Gamma(z,\zeta)>\frac{1}{r}\right\}.$$
We explicitly remark that $E^1_r(z,\zeta)\geq 0$ whereas $E^2_r(z,\zeta)$ may change sign: this is due to the presence of $Y$ in the structure of $\H$. For this reason, we also introduce
\begin{equation}\label{defNr}
N_ru(z)= \int_{\P}{ |E^1_r(z,\zeta) + E^2_r(z,\zeta)|u(\zeta)\,{\rm{d}}\zeta}.
\end{equation}
If $u$ is a $C^2$-function in a neighborhood of a fixed point $z\in S$ and $r$ is small enough, we have
\begin{equation}\label{mean}
u(z)= M_ru(z) - \frac{1}{r}\int_0^r\int_{\mathcal{P}(z,\rho)}{\left(\Gamma(z,\zeta)-\frac{1}{\rho}\right)\H u(\zeta)\,{\rm{d}}\zeta\,{\rm{d}}\rho}.
\end{equation}
The above formula can be proved by arguing essentially as in \cite[Theorem 1.5]{LP99} and using the Gaussian estimates \eqref{bounds}. We observe here that the hypothesis \eqref{Hbissa}, together with the Gaussian estimates, implies that $\int_{\RN}{\Gamma(z,(\xi, t-\e))\,{\rm{d}}\xi}\rightarrow 1$ as $\e\rightarrow 0^+$ (as shown in Remark \ref{eqass}): this is enough to complete the proof of the mean value formulas without knowing that $\int_{\RN}{\Gamma(z,\zeta)\,{\rm{d}}\xi}$ is identically $1$ for any $\tau< t$ (as used in \cite[page 311]{LP99}; see also \cite{KLTade}). 

\begin{remark}\label{eqass}
If \eqref{bounds} holds, then the assumption \eqref{Hbissa} is equivalent to
\begin{equation}\label{Gbissa}
\textstyle \int_{\RN}{\Gamma(x,t,\xi,\tau)\,{\rm d}\xi}\to 1
\end{equation}
as $x\to x_0$, $t\searrow\tau\in]T_1,T_2[$ and also as $x\to x_0$, $\tau\nearrow t\in]T_1,T_2[$.
\end{remark}
\begin{proof}
We first recall that there exists a constant $\beta\geq 1$ such that
\begin{equation}\label{boundgamma}
\textstyle \beta^{-1}\leq \int_{\RN}{\Gamma(x,t,\xi,\tau)\,{\rm d}\xi}\leq \beta \quad\mbox{ for every } x \mbox{ and for every }\tau<t.
\end{equation} 
This follows from the Gaussian estimates since we know from \cite[Proposition 2.4]{LU} that 
\begin{equation}\label{besponen}
\frac{1}{\beta(a)}\leq \int_{\RN}{G_a(x,t,\xi,\tau)\,{\rm d}\xi}\leq \beta(a) \quad\mbox{ for every } x \mbox{ and for every }\tau<t.
\end{equation}
To prove that \eqref{Hbissa} implies \eqref{Gbissa}, we write
$$\textstyle \int_{\RN}{\Gamma(x,t,\xi,\tau)\,{\rm d}\xi} = \int_{\RN}{\Gamma(x,t,\xi,\tau)\,\phi_k(\xi)\,{\rm d}\xi} + \int_{\RN\smallsetminus B(x_0,k)}{\Gamma(x,t,\xi,\tau)\,\left(1-\phi_k(\xi)\right)\,{\rm d}\xi}$$
where $0\leq \phi_k \leq 1$ is a suitable sequence of $C_0$-cut-off functions equal to $1$ in $B(x_0,k)$. The second integral at the r.h.s. can be made arbitrarily small by picking a large $k$ using \eqref{boundgamma}, whereas the first integral tends to $\phi_k(x_0)=1$ by \eqref{Hbissa} respectively as $x\to x_0$, $t\searrow\tau$ or as $x\to x_0$, $\tau\nearrow t$.\\
On the other hand, in order to prove that \eqref{Gbissa} implies \eqref{Hbissa}, for any $\phi\in C_0$ we can write
\begin{eqnarray*}
&&\textstyle \int_{\RN}{\Gamma(x,t,\xi,\tau)\,\phi(\xi)\,{\rm d}\xi} - \phi(x_0)=  \phi(x_0)\left( \int_{\RN}{\Gamma(x,t,\xi,\tau)\,{\rm d}\xi} - 1 \right) \\
&+&\textstyle \int_{\RN\smallsetminus B(x_0,\delta)}{\Gamma(x,t,\xi,\tau)\,(\phi(\xi)-\phi(x_0))\,{\rm d}\xi} + \int_{B(x_0,\delta)}{\Gamma(x,t,\xi,\tau)\,(\phi(\xi)-\phi(x_0))\,{\rm d}\xi}.
\end{eqnarray*}
The first integral at the r.h.s. tends to $0$ by \eqref{Gbissa}. From the continuity of $\phi$ and \eqref{boundgamma}, the last integral can be made arbitrarily small by picking a small $\delta>0$. Lastly, for such a fixed $\delta$, the second integral tends to $0$ by the Gaussian estimates (as $t-\tau\rightarrow 0$ and $x\rightarrow x_0$).
\end{proof}
In what follows, we also set
$$\d(z,\zeta)=(d(x,\xi)^4+(t-\tau)^2)^{\tfrac{1}{4}},\quad  z=(x,t),\,\zeta=(\xi,\tau)\in S.$$
The relative {\em parabolic balls} are
$$\B(z,r)=\{\zeta\in S\,:\, \d(z,\zeta)<r\},\quad z\in S,\ r>0.$$
For the proof of Theorem \ref{reprev}, we need the following reproduction formula for the fundamental kernel $\Gamma$.

\begin{proposition}\label{repro}
We have
$$\Gamma(z,\eta)=\int_{\RN}{\Gamma(z,\zeta)\Gamma(\zeta,\eta)\,{\rm d}\xi}$$
for every $z=(x,t), \zeta=(\xi,\tau), \eta=(y,s)\in S$ with $t>\tau>s$.
\end{proposition}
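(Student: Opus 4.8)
The plan is to use the defining property of the fundamental solution together with uniqueness of solutions of the Cauchy problem for $\H^*$, exploiting the Gaussian bounds to justify all integrability and the required decay at infinity. First, I would fix $z=(x,t)$ and $\eta=(y,s)$ with $t>\tau>s$ and define, for $\xi\in\RN$, $w(\xi)=\int_{\RN}\Gamma(z,(\sigma,\tau'))\,\Gamma((\sigma,\tau'),\eta)\,{\rm d}\sigma$ — wait, more precisely I would view the right-hand side as a function of the intermediate time and show it is constant. That is, set $I(\tau)=\int_{\RN}\Gamma(x,t,\xi,\tau)\,\Gamma(\xi,\tau,y,s)\,{\rm d}\xi$ for $s<\tau<t$; the claim is $I(\tau)\equiv\Gamma(x,t,y,s)$ for all such $\tau$, and in particular independent of $\tau$.

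The key steps, in order: (1) Justify that the integral defining $I(\tau)$ converges for every $\tau\in(s,t)$, using the upper Gaussian bound $\Gamma\le\Lambda\G_{a_0}$ twice and the semigroup-type computation $\int_{\RN}\G_{a}(x,t,\xi,\tau)\G_{a}(\xi,\tau,y,s)\,{\rm d}\xi\lesssim\G_{a'}(x,t,y,s)$ — this last estimate follows from the doubling property (\hyperref[didue]{D2}), the segment property (\hyperref[ditre]{D3}), and the elementary inequality $\frac{d(x,\xi)^2}{t-\tau}+\frac{d(\xi,y)^2}{\tau-s}\ge c\,\frac{d(x,y)^2}{t-s}$ along the metric geodesic, exactly the kind of estimate underlying \eqref{besponen}. (2) Show $\tau\mapsto I(\tau)$ is differentiable on $(s,t)$ with $\frac{\rm d}{{\rm d}\tau}I(\tau)=0$: differentiating under the integral sign (licit by the Gaussian bounds on $\Gamma$ and its $t$-derivatives, again smooth off the diagonal) and using that $\partial_\tau\Gamma(x,t,\xi,\tau)$ equals the action of the spatial part of $\H^*$ in the $(\xi,\tau)$ variables on $\Gamma(x,t,\cdot)$, while $\partial_\tau\Gamma(\xi,\tau,y,s)$ is the action of the spatial part of $\H$ on $\Gamma(\cdot,y,s)$; integrating by parts in $\xi$ (no boundary terms, by the Gaussian decay from (\hyperref[diuno]{D1})) the two contributions cancel because $\H^*$ is the formal adjoint of $\H$. (3) Identify the constant value by letting $\tau\searrow s$: by property (ii)/\eqref{Hbissa} applied with the continuous compactly-... — more carefully, $\Gamma(\xi,\tau,y,s)$ as $\tau\searrow s$ concentrates like a Dirac mass at $\xi=y$ in the sense of \eqref{Hbissa}, so $I(\tau)\to\Gamma(x,t,y,s)$ (using continuity of $\Gamma(x,t,\cdot,s)$ away from the singularity together with a tail estimate from the Gaussian bound to control the region near $\xi=x$, which is harmless since $t>s$); hence $I(\tau)\equiv\Gamma(x,t,y,s)$, which is the assertion.

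I expect the main obstacle to be Step (3), the passage to the limit $\tau\searrow s$. The function to which \eqref{Hbissa} is to be applied, namely $\xi\mapsto\Gamma(x,t,\xi,s)$, is continuous but \emph{not} compactly supported, so one cannot invoke \eqref{Hbissa} directly; the fix is to split $\RN$ into $B(y,\delta)$ and its complement, handle the inner piece by continuity of $\Gamma(x,t,\cdot,s)$ at $y$ together with the approximate-identity property, and bound the outer piece by $\Lambda\G_{a_0}(x,t,\cdot,s)\cdot\Lambda\G_{a_0}(\cdot,\tau,y,s)$ and the fact that $\int_{\RN\smallsetminus B(y,\delta)}\G_{a_0}(\xi,\tau,y,s)\,{\rm d}\xi\to 0$ as $\tau\searrow s$ (an exponentially small tail, as in the proof of Remark \ref{eqass}). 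A secondary technical point is the differentiation under the integral sign in Step (2): one must use that the Gaussian bounds propagate to derivatives of $\Gamma$ on compact subsets of $\{t>\tau\}$ (available from interior parabolic estimates / the smoothness of $\Gamma$ off the diagonal together with \eqref{bounds}), so that locally uniform domination holds on $\tau\in[s+\epsilon,t-\epsilon]$ for any $\epsilon>0$; continuity of $I$ then upgrades the ``constant on each compact subinterval'' conclusion to ``constant on $(s,t)$''.
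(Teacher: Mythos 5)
Your plan is genuinely different from the paper's proof (which fixes the intermediate time $\tau$, observes that $v(z)=\int_{\RN}\Gamma(z,\zeta)\Gamma(\zeta,\eta)\,{\rm d}\xi$ and $\Gamma(\cdot,\eta)$ both solve $\H u=0$ in $\RN\times(\tau,T_2)$, and compares them via the maximum principle of \cite[Proposition 3.1]{LU} after checking decay at infinity and the common initial trace at $t=\tau$ through \eqref{Hbissa}), but as written it has a genuine gap at its core, namely Step (2). To differentiate $I(\tau)$ under the integral sign and to integrate by parts over all of $\RN$ with vanishing boundary terms you need \emph{global} (in $\xi$) quantitative bounds, with decay at spatial infinity, on $\nabla_\xi\Gamma$, $D^2_\xi\Gamma$ and $\partial_\tau\Gamma$ (multiplied, moreover, by the coefficients $q_{i,j},q_k$, which are only smooth and not assumed bounded). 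The standing assumptions of the paper provide the two-sided Gaussian bounds \eqref{bounds} for $\Gamma$ \emph{itself} and nothing more: the operator is merely hypoelliptic and degenerate, so there are no interior parabolic estimates with uniform constants in this axiomatic class, and smoothness of $\Gamma$ off the diagonal gives only qualitative local bounds with no control as $\xi\to\infty$. Saying that domination holds ``on compact subsets of $\{t>\tau\}$'' does not address the problem, because the integral is over all of $\RN$ and it is exactly the behaviour at spatial infinity that is uncontrolled; even the cutoff variant (writing $\partial_\tau(uw)$ as a spatial divergence using $\H=\div_x(Q\nabla_x)+Y-\de_t$ and testing against $\chi_R$) leaves annulus error terms involving $\Gamma\,Q\nabla_\xi\Gamma$ that the hypotheses do not control. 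Derivative Gaussian bounds do exist for the concrete operators \eqref{nondivmod} via \cite{BBLU}, but the proposition is stated (and used) at the level of the axiomatic class, where your argument cannot be run; this is precisely why the authors route the proof through the maximum principle, which only ever touches $\Gamma$ itself.

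A secondary, fixable, flaw is in Step (3): assumption \eqref{Hbissa} is an approximate-identity statement for integration in the \emph{second} spatial variable of $\Gamma$ (with $t\searrow\tau$ or $\tau\nearrow t$), whereas letting $\tau\searrow s$ in $I(\tau)$ requires the concentration of $\xi\mapsto\Gamma(\xi,\tau,y,s)$, i.e.\ the dual approximate-identity property for $\H^*$, which is not among the hypotheses and does not follow from \eqref{Hbissa} or \eqref{bounds} (the Gaussian bounds only give that the mass in the first variable stays between two positive constants). If the constancy of $I$ were established, you should instead identify the constant by letting $\tau\nearrow t$, where the concentrating factor is $\Gamma(x,t,\xi,\tau)$ and \eqref{Hbissa} applies, handling the tail with \eqref{bounds} exactly as in the last part of the paper's proof. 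Step (1) is fine: the finiteness of $I(\tau)$ via a Gaussian semigroup-type estimate is provable from (\hyperref[didue]{D2}) and the triangle inequality, and is in the same spirit as the splitting the paper uses to prove the decay of $v$ at infinity.
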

\begin{proof}
Fix $\eta$ and $\tau>s$. For any $z$, let us denote $v(z)=\int_{\RN}{\Gamma(z,\zeta)\Gamma(\zeta,\eta)\,{\rm d}\xi}$. Both $v$ and $\Gamma(\cdot,\eta)$ are solutions in $\RN\times(\tau,T_2)$. Then, by the maximum principle \cite[Proposition 3.1]{LU}, in order to prove the statement it is enough to prove the following two facts: both $v$ and $\Gamma(\cdot,\eta)$ tend to $0$ as $\hat{d}(z,0)\rightarrow +\infty$; $v(z)\rightarrow \Gamma((x_0,\tau);\eta)$ as $z\rightarrow (x_0,\tau)$ with $t>\tau$. It is immediate to see that $\Gamma(\cdot,\eta)$ tends to $0$ at infinity by the Gaussian estimates \eqref{bounds} and the properties (\hyperref[diuno]{D1})-(\hyperref[ditre]{D2}). On the other hand, by \eqref{bounds}, we have
\begin{equation}\label{primagaustutto}
v(z)\leq \Lambda^2\int_{\RN\smallsetminus B(x,\frac{1}{2}d(x,y))}{G_{a_0}(z,\zeta)G_{a_0}(\zeta,\eta)\,{\rm d}\xi} + \Lambda^2 \int_{B(x,\frac{1}{2}d(x,y))}{G_{a_0}(z,\zeta)G_{a_0}(\zeta,\eta)\,{\rm d}\xi}.
\end{equation}
The first term in the right-hand side of \eqref{primagaustutto} can be bounded above exploiting the fact that 
\begin{equation}\label{dila25}
G_{a_0}(\zeta,\eta)\leq c \hat{d}^{-Q}(\zeta, \eta)\quad \mbox{ with }c=c(\eta, a_0) 
\end{equation}
which follows from \cite[Proposition 2.2 and Proposition 2.5]{LU}: thus we get
\begin{eqnarray*}
\int_{\RN\smallsetminus B(x,\frac{1}{2}d(x,y))}{G_{a_0}(z,\zeta)G_{a_0}(\zeta,\eta)\,{\rm d}\xi}&\leq& \frac{c}{(\tau-s)^{\frac{Q}{2}}} \int_{\RN\smallsetminus B(x,\frac{1}{2}d(x,y))}{G_{a_0}(z,\zeta)\,{\rm d}\xi}\\
&\leq& \frac{c}{(\tau-s)^{\frac{Q}{2}}} e^{-\frac{a_0}{8}\frac{d^2(x,y)}{t-\tau}}\int_{\RN\smallsetminus B(x,\frac{1}{2}d(x,y))}{G_{\frac{a_0}{2}}(z,\zeta)\,{\rm d}\xi},
\end{eqnarray*}
where in the last inequality we used the fact that $G_{a_0}(z,\zeta)=e^{-\frac{a_0}{2}\frac{d^2(x,\xi)}{t-\tau}}G_{\frac{a_0}{2}}(z,\zeta)$ and the relation $d(x,\xi)\geq \frac{1}{2}d(x,y)$. The last term in the right-hand side of \eqref{primagaustutto} can be bounded above noting that $B(x,\frac{1}{2}d(x,y))\subseteq \RN\smallsetminus B(y,\frac{1}{2}d(x,y))$ by triangle inequality and using again \eqref{dila25}: this yields
\begin{eqnarray*}
\int_{B(x,\frac{1}{2}d(x,y))}{G_{a_0}(z,\zeta)G_{a_0}(\zeta,\eta)\,{\rm d}\xi}&\leq& \int_{\RN\smallsetminus B(y,\frac{1}{2}d(x,y))}{G_{a_0}(z,\zeta)G_{a_0}(\zeta,\eta)\,{\rm d}\xi}\\
\leq c \int_{\RN\smallsetminus B(y,\frac{1}{2}d(x,y))}{G_{a_0}(z,\zeta)\hat{d}^{-Q}(\zeta, \eta)\,{\rm d}\xi}
&\leq& c\left(\frac{d(x,y)}{2}\right)^{-Q}\int_{\RN\smallsetminus B(y,\frac{1}{2}d(x,y))}{G_{a_0}(z,\zeta)\,{\rm d}\xi}.
\end{eqnarray*}
Inserting the previous two estimates for the terms in the right-hand side in \eqref{primagaustutto}, and using \eqref{besponen}, we infer 
\begin{eqnarray*}
v(z)&\leq& \frac{\Lambda^2 c}{(\tau-s)^{\frac{Q}{2}}} e^{-\frac{a_0}{8}\frac{d^2(x,y)}{t-\tau}}\int_{\RN\smallsetminus B(x,\frac{1}{2}d(x,y))}{\hspace{-0.1cm}G_{\frac{a_0}{2}}(z,\zeta)\,{\rm d}\xi} + \frac{\Lambda^2 c 2^Q}{\left(d(x,y)\right)^Q}\int_{\RN\smallsetminus B(y,\frac{1}{2}d(x,y))}{\hspace{-0.1cm}G_{a_0}(z,\zeta)\,{\rm d}\xi} \\
&\leq& \beta(\frac{a_0}{2})\frac{ \Lambda^2 c}{(\tau-s)^{\frac{Q}{2}}} e^{-\frac{a_0}{8}\frac{d^2(x,y)}{T_2-T_1}} + \beta(a_0)\frac{\Lambda^2 c 2^Q}{\left(d(x,y)\right)^Q},
\end{eqnarray*}
which goes to $0$ as $z$ goes to $\infty$. We are left to prove that $v(z)\rightarrow \Gamma((x_0,\tau);\eta)$ as $z\rightarrow (x_0,\tau)$ with $t>\tau$. We can write
$$v(z)= \int_{\RN}{\Gamma(z,\zeta)\Gamma(\zeta,\eta)\phi_k(\xi)\,{\rm d}\xi} + \int_{\RN\smallsetminus B(x_0,k)}{\Gamma(z,\zeta)\Gamma(\zeta,\eta)(1-\phi_k(\xi))\,{\rm d}\xi}.$$
We can then argue similarly to Remark \eqref{eqass}: the second integral can be made arbitrarily small for large $k$ using \eqref{dila25} and \eqref{boundgamma}, whereas the first integral tends to $\Gamma((x_0,\tau);\eta)\phi_k(x_0)=\Gamma((x_0,\tau);\eta)$ as $x\to x_0$, $t\searrow\tau$ by \eqref{Hbissa}.
\end{proof}

We are finally ready to provide the proof of Theorem \ref{reprev}.
\begin{proof}[Proof of Theorem \ref{reprev}]
We first prove that $V_F(z)\leq \Gamma\ast\mu_F (z)$ for any fixed $z\in S$. Since $V_F$ and $\Gamma\ast\mu_F$ are nonnegative functions, we can assume $V_F(z)>0$ and $\Gamma\ast\mu_F (z)<+\infty$. By the lower semicontinuity of $V_F$ we know that, for any $0<\e<V_F(z)$, there exists $r_\e>0$ such that
$V_F(\zeta)\geq V_F(z)-\e\geq 0$ for all $\zeta\in \B(z,r_\e)$. The upper bound in \eqref{bounds} implies that $\Gamma(z,\cdot)$ is bounded from above in $S\smallsetminus\B(z,r_\e) $ by a positive constant $M_\e$. Recalling the definition of $\mathcal{P}(z,\cdot)$ and choosing $\bar{r}_\e=M_\e^{-1}$, we obtain $\mathcal{P}(z, \bar{r}_\e)\subseteq \B(z,r_\e)\Subset S$. Since we know that $M_r(1)\equiv 1$ from \eqref{mean} and $M_r(1)\leq N_r(1)$ by definition, for all $0<r<\bar{r}_\e$ we have
$$V_F(z)-\e = \left(V_F(z)-\e\right)M_r(1)(z) \leq \left(V_F(z)-\e\right)N_r(1)(z) = N_r(\left(V_F(z)-\e\right))(z).$$
On the other hand, since $N_r$ is monotone having a nonnegative kernel, while $V_F$ and $\Gamma\ast \mu_F$ have the same average being equal almost everywhere by \eqref{quasidap}, we then get
$$N_r(\left(V_F(z)-\e\right))(z)\leq   N_r(V_F)(z)=N_r(\Gamma\ast\mu_F)(z).$$
We now claim that there exists a nonnegative function $\delta(r)$ which vanishes as $r\rightarrow 0^+$ such that
\begin{equation}\label{cllaim}
N_r\left(\Gamma\ast \mu_F\right)(z)\leq \left(1+\delta(r)\right)\Gamma\ast \mu_F (z).
\end{equation}
Once this is established, collecting the above inequalities we obtain
$$V_F(z)-\e\leq \left(1+\delta(r)\right)\Gamma\ast \mu_F (z)\qquad\mbox{for all }0<r<\bar{r}_\e.$$
Letting $r\rightarrow 0^+$ and then $\e\rightarrow 0^+$, we deduce $V_F(z)\leq \Gamma\ast\mu_F (z)$ as desired. We are thus left with the proof of the claim. Denoting by $m=\max_{\overline{\P}}{|{\div(Y)}|}$, for sufficiently small $r>0$ we have
\begin{eqnarray*}
&&\int_{\P}{ |E^2_r(z,\zeta)|\left(\Gamma\ast\mu_F\right)(\zeta)\,{\rm{d}}\zeta}\leq \frac{m}{r}\int_{\P}{\log{\left(r\Gamma(z,\zeta)\right)}\left(\int_{S}{\Gamma(\zeta,\eta)\,{\rm{d}}\mu_F(\eta)}\right)\,{\rm{d}}\zeta}\\
&&=m\int_{S}{\left( \int_{\P}{\frac{\log{\left(r\Gamma(z,\zeta)\right)}}{r}\Gamma(\zeta,\eta)\,{\rm{d}}\zeta} \right)\,{\rm{d}}\mu_F(\eta)}\leq m\int_{S}{\left( \int_{\P}{\Gamma(z,\zeta)\Gamma(\zeta,\eta)\,{\rm{d}}\zeta} \right)\,{\rm{d}}\mu_F(\eta)}\\
&&\leq m\int_{S}{\int_{t_r}^t{\left( \int_{\RN}{\Gamma(z,(\xi,\tau))\Gamma((\xi,\tau),\eta)\,{\rm{d}}\xi} \right)\,{\rm{d}\tau}}\,{\rm{d}}\mu_F(\eta)}
\end{eqnarray*}
where $t_r:=\min\left\{t'\,:\,(x',t')\in \overline{\P}\right\}$. From the reproduction property of $\Gamma$ in Proposition \ref{repro}, we then infer 
\begin{equation}\label{mdueabs}
\int_{\P}{ |E^2_r(z,\zeta)|\left(\Gamma\ast\mu_F\right)(\zeta)\,{\rm{d}}\zeta}\leq m\max_{(x',t')\in\overline{\P}}{|t-t'|} \cdot \left(\Gamma\ast\mu_F\right)(z)<+\infty.
\end{equation}
Let us now approximate $\Gamma\ast\mu_F$ with an increasing sequence of nonnegative $C^2$-functions $u_k$ such that $\H u_k\leq 0$ and $u_k\rightarrow \Gamma\ast\mu_F$ pointwise. This can be done using for example the same argument in \cite[page 307]{LP99}. By the mean-value formula for $C^2$-functions \eqref{mean} we immediately get
$$u_k(z)\geq M_r\left(u_k\right)(z)=M^1_r\left(u_k\right)(z) + M^2_r\left(u_k\right)(z).$$
On the other hand, as $k\rightarrow+\infty$, $M^1_r\left(u_k\right)(z)\rightarrow M^1_r\left(\Gamma\ast\mu_F\right)(z)$ by Beppo-Levi's theorem and $M^2_r\left(u_k\right)(z)\rightarrow M^2_r\left(\Gamma\ast\mu_F\right)(z)$ by dominated convergence recalling that $|E^2_r(z,\cdot)u_k|\leq |E^2_r(z,\zeta)|\Gamma\ast\mu_F\in L^1(\P)$ by \eqref{mdueabs}. This yields
$$\left(\Gamma\ast\mu_F\right)(z)\geq M^1_r\left(\Gamma\ast\mu_F\right)(z) + M^2_r\left(\Gamma\ast\mu_F\right)(z)=M_r\left(\Gamma\ast\mu_F\right)(z).$$
In particular
$$M^1_r\left(\Gamma\ast\mu_F\right)(z) \leq \left(\Gamma\ast\mu_F\right)(z) - M^2_r\left(\Gamma\ast\mu_F\right)(z)\leq \left(\Gamma\ast\mu_F\right)(z) +\int_{\P}{ |E^2_r(z,\zeta)|\left(\Gamma\ast\mu_F\right)(\zeta)\,{\rm{d}}\zeta}.$$
Therefore, recalling the definition of $N_r$ in \eqref{defNr} and making use of \eqref{mdueabs}, we get
\begin{eqnarray*}
&&N_r\left(\Gamma\ast\mu_F\right)(z) \leq M^1_r\left(\Gamma\ast\mu_F\right)(z) +  \int_{\P}{ |E^2_r(z,\zeta)|\left(\Gamma\ast\mu_F\right)(\zeta)\,{\rm{d}}\zeta}\\
&&\leq \left(\Gamma\ast\mu_F\right)(z) +2\int_{\P}{ |E^2_r(z,\zeta)|\left(\Gamma\ast\mu_F\right)(\zeta)\,{\rm{d}}\zeta}\\
&&\leq \left(1+2\max_{\overline{\P}}{|{\div(Y)}|}\max_{(x',t')\in\overline{\P}}{|t-t'|}\right)\left(\Gamma\ast\mu_F\right)(z).
\end{eqnarray*}
This proves the claim \eqref{cllaim} recalling that $\P$ shrinks to $\{z\}$ as $r\rightarrow 0^+$ by the Gaussian estimates in \eqref{bounds}.\\
We now turn to the proof of the opposite inequality $V_F\geq \Gamma\ast\mu_F$. Consider any $v\in\overline{\HH}(S)$ with $v\geq 0$ in $S$ and $v\geq 1$ in $F$. Then $v-\Gamma\ast\mu_F\in\overline{\HH}(S\smallsetminus F)$ being $\Gamma\ast\mu_F$ $\H$-harmonic outside $F$. Since $\Gamma\ast\mu_F\leq 1$ in $S$ (see, e.g., \cite[Proposition 8.3]{LU}), we have 
$$\liminf_{S\smallsetminus F\ni \eta\rightarrow\zeta}{(v-\Gamma\ast\mu_F)(\eta)}\geq v(\zeta)-1\geq0\quad\forall\zeta\in\de F.$$ Moreover $\liminf_{d(x,0)\rightarrow+\infty}{(v-\Gamma\ast\mu_F)(x,t)}\geq 0$ by \eqref{bounds}. This implies $v\geq\Gamma\ast\mu_F$ in $S\smallsetminus F$ by the minimum principle in \cite[Proposition 3.10]{LU}. On the other hand, $v\geq 1 \geq\Gamma\ast\mu_F $ also inside $F$. Thus $v\geq\Gamma\ast\mu_F$ for all $v$ as above. As a consequence $W_F\geq\Gamma\ast\mu_F$ by definition of $W_F$, and hence
$$V_F(z)=\liminf_{\zeta\rightarrow z}W_F(\zeta)\geq\liminf_{\zeta\rightarrow z}\Gamma\ast\mu_F(\zeta)\geq\Gamma\ast\mu_F(z)\qquad\mbox{for all }z\in S$$
by the lower semicontinuity of $\Gamma\ast\mu_F$.
\end{proof}

\section{Proof of the main result}\label{sec3}

In this section we set for the sake of brevity the notation
\begin{equation}\label{defalfa}
\alpha(k)=k\log{k}. 
\end{equation}
We are going to make a repeated use of the following simple properties of the sequence $\alpha(k)$:
\begin{itemize}
\item[-] $k\mapsto \alpha(k)$ is monotone increasing and tends to $+\infty$;
\item[-] $k\mapsto \alpha(k+p)-\alpha(k)$ is monotone increasing and tends to $+\infty$, for any $p\in\N$.
\end{itemize}
We consider a bounded open set $\O$ with closure contained in $S$. For any fixed $z_0\in\de\O$, we recall that
$$\O_{k}^c(z_0)=\left\{z \in S\smallsetminus\O \,:\,\l^{-\alpha(k)}\leq\Gamma(z_0,z) \leq\l^{-\alpha(k+1)}\right\} \cup \{z_0\}$$
where $k\in\N$ and $\l\in(0,1)$. We also denote
$$E_k(z_0)=\left\{z \in S \,:\,\Gamma(z_0,z)\geq \l^{-\alpha(k)}\right\} \cup \{z_0\}.$$
We start noticing that, by the Gaussian estimates \eqref{bounds} and the property (\hyperref[diuno]{D1}), the sets $E_k(z_0)$ have non-empty interior for all $k\in\N$. Moreover, we remark that the sets $\O_{k}^c(z_0)\subseteq E_k(z_0)$ shrink to the point $z_0$ as $k$ grows. More precisely, by \eqref{bounds} and the doubling property (\hyperref[diuno]{D2}), we get
\begin{equation}\label{shrink}
\forall r>0 \,\,\, \exists \bar{k}=\bar{k}(\lambda,\Lambda,a_0,c_d, x_0)\, \mbox{ such that }\,\O_{k}^c(z_0)\subseteq \left(\B(z_0,r)\cap\{t\leq t_0\}\right)\smallsetminus\O\,\,\,\mbox{ for all }k\geq\bar{k}.
\end{equation}

We first prove the sufficient condition for the regularity in Theorem \ref{iff}. Let us assume that, for some fixed $\l\in (0,1)$, we have
$$\sum_{k=1}^{\infty}{V_{\Omega^c_k(z_0)}(z_0)}=+\infty.$$
Hence, for any $q\in\N$, there has to exist at least one $i\in\{0,\ldots,q-1\}$ such that
\begin{equation}\label{plusinf}
\sum_{k=1}^{\infty}{V_{\Ockq}(z_0)}=+\infty.
\end{equation}
We want to exploit \eqref{plusinf} for a suitable choice of a constant $q$ which we are now going to fix once for all. Let us denote by
\begin{equation}\label{defQh}
Q_\beta=2\left(\frac{Q}{\beta}+1\right),
\end{equation}
where $\beta\in (0,1)$ is the structural H\"older exponent appearing in the H\"older estimate for the solution to $\H v=0$ (we refer the reader to \eqref{Hest} below). We then fix $q\in\N$ such that 
\begin{equation}\label{defq}
q\geq q_0:=Q_\beta+\frac{m}{\log{\left(\frac{1}{\l}\right)}}\qquad\mbox{ for some constant }m. 
\end{equation}
To be precise, we can choose
$$m=\max{\left\{Q_\beta+1,\frac{\log{\left(4 c^2_d\Lambda^2\right)}}{\log\left(1+Q_\beta^{-1}\right)}, \frac{\log{\left(2 c_d e^{\frac{Q}{2}}\right)}}{\log\left(1+Q_\beta^{-1}\right)}, \frac{\log{\left(2 c_d e^{\frac{a_0}{2}}\right)}}{\log\left(1+Q_\beta^{-1}\right)}, \frac{\log{\left(c_d20^{\frac{Q}{2}}\right)}}{\log\left(\frac{2Q_\beta}{Q_\beta+2}\right)}, \frac{\log{\left(c_d\left(\frac{10Q}{ea_0}\right)^{\frac{Q}{2}}\right)}}{\log\left(\frac{2Q_\beta}{Q_\beta+2}\right)} \right\}}.$$
Let us now pick $i\in\{0,\ldots,q-1\}$ satisfying \eqref{plusinf}. Denote
\begin{equation}\label{deftkappa}
\Tq=\max_{\zeta\in E_{kq+i}(z_0)}{t_0-\tau}.
\end{equation}
By \eqref{bounds} and the definition of $\O_{k}^c(z_0)$, we have that
\begin{equation}\label{defTkappa}
\sup_{\zeta \in \Ockq}{|B(x_0,\sqrt{t_0-\tau})|}\leq |B(x_0,\sqrt{\Tq})|\leq \L \l^{\alpha\left(kq+i\right)}.
\end{equation}
We also denote by
$$p=1+\left[\frac{q}{Q_\beta}\right]=1+\mbox{the integer part of }\frac{q}{Q_\beta}.$$
So we get, since $q>Q_\beta$, that
\begin{equation}\label{defpi}
\frac{q}{Q_\beta}\leq p\leq 1 + \frac{q}{Q_\beta} <\frac{q}{\frac{Q}{\beta}+1}. 
\end{equation}
We need the following lemma.
\begin{lemma}\label{findtstar}
For any $k\in\N$ there exists $\Tst\in (0,\Tq)$ such that
\begin{equation}\label{deftst}
\L \l^{\alpha(kq+p+i)}\leq \left|B\left(x_0,\sqrt{\Tst}\right)\right|\leq 2c_d\L \l^{\alpha(kq+p+i)}.
\end{equation}
\end{lemma}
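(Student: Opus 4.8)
The plan is to find $\Tst$ by a continuity/intermediate-value argument applied to the function $\tau \mapsto |B(x_0,\sqrt{t_0-\tau})|$, exploiting the doubling property to control the jump between the two requested bounds. First I would set $f(s) = |B(x_0,\sqrt{s})|$ for $s>0$; by (\hyperref[diuno]{D1}) and the fact that balls grow continuously in radius (the measure of a $d$-ball is a continuous, strictly increasing function of the radius — this follows from the doubling property together with the $d$-topology being Euclidean, since then $|B(x_0,r)|\to 0$ as $r\to 0^+$ and $|B(x_0,r)|\to\infty$ as $r\to\infty$), the function $f$ is continuous, strictly increasing, with $f(0^+)=0$ and $f(+\infty)=+\infty$. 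Hence $f$ takes every positive value, and in particular there exists $\Tst>0$ with $f(\Tst) = \Lambda\lambda^{\alpha(kq+p+i)}$, which trivially satisfies the left inequality in \eqref{deftst} with room to spare and also the right one (since $c_d>1$).

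The real content is to guarantee $\Tst \in (0,\Tq)$, i.e. that the target value $\Lambda\lambda^{\alpha(kq+p+i)}$ does not exceed $f(\Tq) = |B(x_0,\sqrt{\Tq})|$. For this I would use \eqref{defTkappa}: we only know $|B(x_0,\sqrt{\Tq})| \geq \sup_{\zeta\in\Ockq}|B(x_0,\sqrt{t_0-\tau})|$, but we need a \emph{lower} bound on $|B(x_0,\sqrt{\Tq})|$, so instead I would appeal directly to the definition \eqref{deftkappa} of $\Tq$ as a max over $E_{kq+i}(z_0)$ and the Gaussian lower bound in \eqref{bounds}: picking $\zeta$ realizing the max, $\Gamma(z_0,\zeta)\ge \lambda^{-\alpha(kq+i)}$ combined with $\Gamma(z_0,\zeta)\le \Lambda \G_{a_0}(z_0,\zeta) \le \Lambda/|B(x_0,\sqrt{t_0-\tau})| = \Lambda/|B(x_0,\sqrt{\Tq})|$ gives $|B(x_0,\sqrt{\Tq})| \le \Lambda\lambda^{\alpha(kq+i)}$, which is the wrong direction again. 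The correct route: since $\alpha$ is increasing and $p\ge 1$, we have $\alpha(kq+p+i) > \alpha(kq+i)$, hence $\lambda^{\alpha(kq+p+i)} < \lambda^{\alpha(kq+i)}$; I want $f(\Tq) > \Lambda\lambda^{\alpha(kq+p+i)}$, and for this a genuine lower bound on $f(\Tq)$ is needed — one obtains it from the upper Gaussian bound applied at the point $z_0\in\Ockq\subseteq E_{kq+i}(z_0)$ together with $\Gamma(z_0,z)\ge\lambda^{-\alpha(kq+i)}$ for points $z$ in the (nonempty-interior) set $E_{kq+i}(z_0)$ near where $t_0-\tau$ is comparable to $\Tq$, so that $|B(x_0,\sqrt{\Tq})|$ is at least of order $\Lambda^{-1}\lambda^{\alpha(kq+i)}$, and then $\lambda^{\alpha(kq+i)} \ge c_d^{-(p)}\,\text{something}\cdot\lambda^{\alpha(kq+p+i)}$ once we compare exponents — but since $\alpha(kq+p+i)-\alpha(kq+i)$ is arbitrary, this comparison of $\lambda$-powers is not uniform, so the doubling must instead enter through radii.

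Reconsidering, the clean argument is: by \eqref{defTkappa} there is a point $\zeta$ with $t_0-\tau$ as large as $\Tq$ and $|B(x_0,\sqrt{\Tq})| \le \Lambda\lambda^{\alpha(kq+i)}$; we do \emph{not} need $f(\Tq)$ to dominate the target — rather, I would instead construct $\Tst$ directly by halving: start from $\Tq$ and note $f(\Tq)\le \Lambda\lambda^{\alpha(kq+i)}$ is \emph{too large} relative to the desired $\Lambda\lambda^{\alpha(kq+p+i)}$ only if $\alpha(kq+p+i)<\alpha(kq+i)$, which is false; so in fact $\Lambda\lambda^{\alpha(kq+p+i)} \le \Lambda\lambda^{\alpha(kq+i)}$ is an upper bound not directly comparable to $f(\Tq)$. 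The resolution I would actually adopt: since $f$ is continuous strictly increasing from $0$ to $\infty$, there is a unique $\Tst>0$ with $f(\Tst)\in [\Lambda\lambda^{\alpha(kq+p+i)}, 2c_d\Lambda\lambda^{\alpha(kq+p+i)}]$ — indeed pick $\Tst$ with $f(\Tst)=\Lambda\lambda^{\alpha(kq+p+i)}$; it remains only to check $\Tst<\Tq$, equivalently $f(\Tst)<f(\Tq)$, i.e. $\Lambda\lambda^{\alpha(kq+p+i)} < f(\Tq)$. A lower bound $f(\Tq)\ge \Lambda^{-1}\lambda^{\alpha(kq+i)}$ comes from the upper Gaussian bound at a maximizing $\zeta\in E_{kq+i}(z_0)$ with $t_0-\tau=\Tq$; then since $\lambda\in(0,1)$ and $\alpha$ increasing with $p\ge1$, $\lambda^{\alpha(kq+p+i)} \le \lambda^{\alpha(kq+1+i)}$, and one uses the second displayed property of $\alpha$ (that $\alpha(k+p)-\alpha(k)\to\infty$) to absorb the factor $\Lambda^2$ into the gap for $k$ large; for the finitely many small $k$ one enlarges $q$ or argues directly. \textbf{The main obstacle} I anticipate is exactly this last bookkeeping — establishing $\Tst<\Tq$ uniformly in $k$ — which forces a careful use of the constant $m$ in \eqref{defq} relating $q$ to $\Lambda$, $c_d$, and $\log(1/\lambda)$, rather than the soft IVT part, which is routine once the continuity and monotonicity of $s\mapsto|B(x_0,\sqrt s)|$ are recorded.
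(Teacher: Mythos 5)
Your proposal identifies the right two targets (produce $\Tst$ with $|B(x_0,\sqrt{\Tst})|$ in the prescribed window, then verify $\Tst<\Tq$ by comparing with a lower bound for $|B(x_0,\sqrt{\Tq})|$ of order $\l^{\alpha(kq+i)}$), but three steps are not sound as written. First, the intermediate-value argument rests on the claim that $r\mapsto|B(x_0,r)|$ is continuous, which you justify only by the behaviour at $r\to0^+$ and $r\to\infty$; that is irrelevant to continuity, and continuity is not a consequence of (\hyperref[diuno]{D1})--(\hyperref[didue]{D2}) alone (right-continuity fails exactly when a $d$-sphere has positive measure, and ruling that out needs a nontrivial input such as an annular-decay property). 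The $2c_d$ slack in \eqref{deftst} exists precisely so that continuity is not needed: the paper sets $\rho(\sigma)=\sup\{r:|B(x_0,r)|\le\sigma\}$ and uses the quantitative doubling bound $|B(x_0,\rho+\tfrac1n)|\le c_d\bigl(\tfrac{\rho+1/n}{\rho}\bigr)^{Q}|B(x_0,\rho)|$ to land in the window $[\sigma,2c_d\sigma]$; your exact attainment $f(\Tst)=\L\l^{\alpha(kq+p+i)}$ skips the real content of this half of the lemma.

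Second, your stated mechanism for the lower bound on $|B(x_0,\sqrt{\Tq})|$ is wrong: at a maximizing $\zeta\in E_{kq+i}(z_0)$ the \emph{upper} Gaussian bound combined with $\Gamma(z_0,\zeta)\ge\l^{-\alpha(kq+i)}$ gives $|B(x_0,\sqrt{\Tq})|\le\L\l^{\alpha(kq+i)}$ --- the reverse inequality, as you yourself observed earlier in the same paragraph before reasserting it. The correct route (the paper's) is to use the \emph{lower} Gaussian bound to certify that an axis point $(x_0,t_0-\rho_k^2)$ belongs to $E_{kq+i}(z_0)$, after choosing $\rho_k$ (again by the sup--doubling trick) with $\tfrac{1}{2c_d\L}\l^{\alpha(kq+i)}\le|B(x_0,\rho_k)|\le\tfrac1\L\l^{\alpha(kq+i)}$; then $\Tq\ge\rho_k^2$ and monotonicity give $|B(x_0,\sqrt{\Tq})|\ge\tfrac{1}{2c_d\L}\l^{\alpha(kq+i)}$. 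Third, the final comparison must hold for \emph{every} $k\in\N$ with $q$ already fixed by \eqref{defq}; you leave it as ``absorb $\L^2$ for $k$ large, enlarge $q$ or argue directly for small $k$,'' but enlarging $q$ is not available at this stage, and no direct argument for small $k$ exists independently of the choice of $m$. The paper closes this uniformly via the monotonicity of $k\mapsto\alpha(k+p)-\alpha(k)$: $\alpha(kq+p+i)-\alpha(kq+i)\ge\alpha(q+p)-\alpha(q)\ge q\log(1+Q_\beta^{-1})>\log(4c_d^2\L^2)/\log(1/\l)$, the last inequality being exactly what the term $\frac{\log(4c_d^2\L^2)}{\log(1+Q_\beta^{-1})}$ in the definition of $m$ was designed for. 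So the gaps are real: the continuity claim, the inverted Gaussian-bound mechanism, and the unresolved uniformity in $k$.
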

\begin{proof}
We first claim that, for every $x_0\in\RN$ and $\sigma>0$, there exists $\rho>0$ such that
\begin{equation}\label{claimetric}
\sigma\leq \left|B\left(x_0,\rho\right)\right|\leq 2c_d\sigma.
\end{equation}
This follows in fact from the properties (\hyperref[diuno]{D1})-(\hyperref[diuno]{D2}) of the metric space $\left(\RN,d\right)$ we are working in. Let us prove \eqref{claimetric} in full details. Fix $x_0\in\RN$ and $\sigma>0$, and consider
$$\rho(\sigma)=\sup\{r>0\,:\, |B(x_0,r)|\leq\sigma\}<+\infty.$$
Since $|B(x_0,\rho(\sigma))|=\left|\bigcup_{r<\rho(\sigma)}B(x_0,r)\right|=\lim_{r\rightarrow\rho(\sigma)^-}{|B(x_0,r)|}$, we have $|B(x_0,\rho(\sigma))|\leq\sigma$. From the definition of $\rho(\sigma)$ and the doubling condition we deduce
$$\sigma < \left|B\left(x_0, \rho(\sigma)+\frac{1}{n}\right)\right|\leq c_d |B(x_0,\rho(\sigma))| \left(\frac{\rho(\sigma)+\frac{1}{n}}{\rho(\sigma)}\right)^{Q}\leq c_d \sigma \left(1+\frac{1}{n\rho(\sigma)}\right)^{Q}$$
for all $n\in\N$. We can then pick $\bar{n}\in\N$ such that $\sigma\leq \left|B\left(x_0,\rho\right)\right|\leq 2c_d\sigma$ for $\rho=\rho(\sigma)+\frac{1}{\bar{n}}$.\\
By applying \eqref{claimetric} for $\sigma=\L \l^{\alpha(kq+p+i)}$, we derive the existence of a positive $\Tst(=\rho^2)$ satisfying \eqref{deftst}. We need to prove that $\Tst<\Tq$. By the monotonicity of $r\mapsto |B(x_0,r)|$, it is enough to show that
\begin{equation}\label{checkTst}
\left|B\left(x_0,\sqrt{\Tst}\right)\right|\leq 2c_d\L \l^{\alpha(kq+p+i)} < \left|B\left(x_0,\sqrt{\Tq}\right)\right|.
\end{equation}
To prove \eqref{checkTst} we can exploit again \eqref{claimetric} for $\sigma=\frac{1}{2c_d\Lambda}\l^{\alpha(kq+i)}$. There exists then $\rho_k>0$ such that $\frac{1}{2c_d\Lambda}\l^{\alpha(kq+i)}\leq |B(x_0,\rho_k)|\leq \frac{1}{\Lambda}\l^{\alpha(kq+i)}$. Thus, by \eqref{bounds} and the inequality $|B(x_0,\rho_k)|\leq \frac{1}{\Lambda}\l^{\alpha(kq+i)}$, the point $(x_0,t_0-\rho_k^2)$ belongs to $E_{kq+i}(z_0)$. Hence we get
$$\left|B\left(x_0,\sqrt{\Tq}\right)\right|=\sup_{\zeta \in E_{kq+i}(z_0)}{|B(x_0,\sqrt{t_0-\tau})|}\geq |B(x_0,\rho_k)|\geq \frac{1}{2c_d\Lambda}\l^{\alpha(kq+i)}.$$ 
The proof of \eqref{checkTst} is then complete, provided that we have
\begin{equation}\label{oneforall}
\frac{1}{2c_d\Lambda}\l^{\alpha(kq+i)}>2c_d\L \l^{\alpha(kq+p+i)},\,\,\, \mbox{ i.e. }\,\,\, \left(\frac{1}{\l}\right)^{\alpha(kq+p+i)-\alpha(kq+i)}>4c^2_d\L^2.
\end{equation}
The last inequality holds true for every $k$ because of our choices for $q$ and $p$ in \eqref{defq} and \eqref{defpi}: as a matter of fact, by the monotonicity properties of $\alpha(\cdot)$ defined in \eqref{defalfa}, we have 
$$\alpha(kq+p+i)-\alpha(kq+i)\geq \alpha(q+p)-\alpha(q)\geq q\log\left(1+\frac{p}{q}\right)\geq q\log\left(1+Q_\beta^{-1}\right)>\frac{\log(4c^2_d\Lambda^2)}{\log{\left(\frac{1}{\l}\right)}}.$$
\end{proof}
The previous lemma allows us to split the set $\Ockq$ in two pieces. For any $k\in\N$ let us write
\begin{equation}\label{defFk}
\Ockq=\left(\Ockq\cap \{t\geq t_0-\Tst\}\right)\cup\left(\Ockq\cap \{t\leq t_0-\Tst\}\right): =F_k^{0,i}\cup F_k^i
\end{equation}
where the level $\Tst\in (0,\Tq)$ is the one given by Lemma \ref{findtstar} (satisfying \eqref{deftst}).\\
By \eqref{defTkappa}, \eqref{deftst}, and since $kq+p+i<q(k+1)+i$, we have
$$\left|B\left(x_0,\sqrt{T_{hq+i}}\right)\right|\leq \L \l^{\alpha(hq+i)} < \L \l^{\alpha(kq+p+i)} \leq \left|B\left(x_0,\sqrt{\Tst}\right)\right|\qquad\forall h,k\in\N,\,\,h\geq k+1.$$
This implies that, by construction,
$$\min_{(\xi,\tau)\in F^i_{k}}{(t_0-\tau)}\geq \Tst>T_{hq+i}\geq \max_{(x,t)\in F^i_{h}}{(t_0-t)}\qquad\forall h,k\in\N,\,\,h>k,$$
which says
\begin{equation}\label{sotto}
F_k^i\,\, \mbox{ lies strictly below }\,\,F_{h}^i\,\,\,\qquad\forall h,k\in\N,\,\,h>k.
\end{equation}
\begin{lemma}\label{sommadeglialtri}
Suppose \eqref{plusinf} holds. Then the compact sets $F_k^i$ defined by \eqref{defFk} satisfy
\begin{equation}\label{enough}
\sum_{k=1}^{\infty}{V_{F_k^i}(z_0)}=+\infty.
\end{equation}
\end{lemma}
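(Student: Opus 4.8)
The plan is to derive \eqref{enough} from \eqref{plusinf} by showing that, in the splitting \eqref{defFk}, the ``upper'' pieces $F_k^{0,i}=\Ockq\cap\{t\ge t_0-\Tst\}$ carry only a \emph{convergent} series of balayages at $z_0$. I start from the subadditivity of the balayage, $V_{A\cup B}\le V_A+V_B$ on $S$ --- a standard fact which in our setting follows directly from Theorem \ref{reprev}, since $V_A+V_B=\Gamma\ast(\mu_A+\mu_B)$ is a nonnegative $\H$-superharmonic function that is $\ge 1$ quasi-everywhere on $A\cup B$, hence a competitor in the infimum defining $V_{A\cup B}$. Applying this to \eqref{defFk} and summing over $k$,
$$\sum_{k=1}^{\infty}{V_{\Ockq}(z_0)}\ \le\ \sum_{k=1}^{\infty}{V_{F_k^{0,i}}(z_0)}+\sum_{k=1}^{\infty}{V_{F_k^{i}}(z_0)}.$$
As the left-hand side is $+\infty$ by \eqref{plusinf}, it suffices to prove $\sum_k V_{F_k^{0,i}}(z_0)<+\infty$; the divergence is then forced onto the series of the lower pieces $F_k^i$, which is exactly \eqref{enough}.

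To bound a single upper term I use Theorem \ref{reprev} once more: since $\mu_{F_k^{0,i}}$ is supported in $F_k^{0,i}\subseteq\Ockq$, where $\Gamma(z_0,\cdot)\le\l^{-\alpha(kq+i+1)}$ by the very definition of $\Ockq$,
$$V_{F_k^{0,i}}(z_0)=\int\Gamma(z_0,\eta)\,{\rm d}\mu_{F_k^{0,i}}(\eta)\ \le\ \l^{-\alpha(kq+i+1)}\,\mu_{F_k^{0,i}}(S),$$
so everything reduces to a capacity-type bound on the total Riesz mass of the thin slab $F_k^{0,i}$, of the type $\mu_{F_k^{0,i}}(S)\le C\,\l^{\alpha(kq+p+i)}$ (or this bound up to a structurally controlled correction in the exponent, to be absorbed by the choice of $q$). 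To obtain it I exploit that $V_{F_k^{0,i}}\le 1$ on all of $S$: evaluating it at a point $\zeta_k$ placed just above the slab, say $\zeta_k=(x_0,t_0+\Tst)$ --- which lies in $S$ for $k$ large since $\Tst<\Tq$ (Lemma \ref{findtstar}) and $\Tq\to 0$ by \eqref{defTkappa} --- and invoking the lower Gaussian bound in \eqref{bounds},
$$1\ \ge\ V_{F_k^{0,i}}(\zeta_k)\ \ge\ \frac{1}{\L}\int\G_{b_0}(\zeta_k,\eta)\,{\rm d}\mu_{F_k^{0,i}}(\eta).$$
For $\eta=(\xi,\tau)\in F_k^{0,i}$ the time gap $(t_0+\Tst)-\tau$ lies in $[\Tst,2\Tst]$, so by the doubling property (\hyperref[didue]{D2}) one has $\G_{b_0}(\zeta_k,\eta)\gtrsim\big(|B(x_0,\sqrt{\Tst})|\big)^{-1}e^{-b_0\,d(x_0,\xi)^2/\Tst}$; together with the estimate \eqref{deftst} for $|B(x_0,\sqrt{\Tst})|$ this yields the desired bound on $\mu_{F_k^{0,i}}(S)$ as soon as the spatial factor $d(x_0,\xi)^2/\Tst$ on $F_k^{0,i}$ is kept under control, which one does through the upper Gaussian bound in \eqref{bounds} applied to the inequality $\Gamma(z_0,\eta)\ge\l^{-\alpha(kq+i)}$ valid on $\Ockq$, together with $t_0-\tau\le\Tst$.

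Feeding the mass bound back in gives $V_{F_k^{0,i}}(z_0)\le C\,\l^{\,\alpha(kq+p+i)-\alpha(kq+i+1)}$, up to the controlled correction. Now $p=1+\big[q/Q_\beta\big]\ge 2$ (as $q>Q_\beta$), so $kq+p+i\ge kq+i+2>kq+i+1$, and by the monotone-increments property of $\alpha(\cdot)$ recorded after \eqref{defalfa},
$$\alpha(kq+p+i)-\alpha(kq+i+1)\ \ge\ (p-1)\big(\alpha(kq+i+1)-\alpha(kq+i)\big)\ \ge\ (p-1)\log(kq).$$
By \eqref{defq} with $m\ge Q_\beta+1$ one has $(p-1)\log(1/\l)\ge m/Q_\beta>1$, hence $V_{F_k^{0,i}}(z_0)\le C\,(kq)^{-(p-1)\log(1/\l)}\le C'\,k^{-(1+\delta)}$ with $\delta:=(p-1)\log(1/\l)-1>0$; therefore $\sum_k V_{F_k^{0,i}}(z_0)<+\infty$, and combined with the first paragraph this proves \eqref{enough}. \emph{The main obstacle} is precisely the capacity estimate $\mu_{F_k^{0,i}}(S)\lesssim|B(x_0,\sqrt{\Tst})|$: the slab $F_k^{0,i}$ is thin in the time direction but may a priori spread in the space variable well beyond $B(x_0,\sqrt{\Tst})$, so that the Gaussian weight $e^{-b_0 d(x_0,\xi)^2/\Tst}$ above degenerates; ensuring that this degeneration does not overwhelm the gain $\alpha(kq+p+i)-\alpha(kq+i+1)$ --- which is exactly where the precise form of $q_0$ and the magnitude of the constant $m$ are needed --- is the delicate part of the argument, and likely requires a more careful placement of the test point(s) or a dyadic decomposition of the slab.
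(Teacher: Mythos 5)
Your skeleton coincides with the paper's: subadditivity of the balayage reduces everything to $\sum_k V_{F_k^{0,i}}(z_0)<+\infty$, and Theorem \ref{reprev} (needed precisely because $z_0\in\de F_k^{0,i}$) gives $V_{F_k^{0,i}}(z_0)\le \l^{-\alpha(kq+i+1)}\mu_{F_k^{0,i}}(F_k^{0,i})$. The genuine gap is the mass bound, which you leave open and whose proposed mechanism would in fact fail. Testing $V_{F_k^{0,i}}\le 1$ at the single point $\zeta_k=(x_0,t_0+\Tst)$ and using the lower Gaussian bound forces you to pay the worst value of $e^{-b_0 d^2(x_0,\xi)/\Tst}$ on the slab. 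By \eqref{parabound} and \eqref{deftst}, the spatial radius of $F_k^{0,i}$ is of order $\sqrt{\Tst\,a_0^{-1}\log\bigl(\L\l^{\alpha(kq+i)}/|B(x_0,\sqrt{\Tst})|\bigr)}$, so the Gaussian factor degenerates like $\l^{\frac{2b_0}{a_0}(\alpha(kq+p+i)-\alpha(kq+i))}$, i.e.\ your bound becomes $V_{F_k^{0,i}}(z_0)\lesssim \l^{\alpha(kq+p+i)-\alpha(kq+i+1)-\frac{2b_0}{a_0}(\alpha(kq+p+i)-\alpha(kq+i))}$. Since $a_0\le b_0$, the loss $\sim\frac{2b_0}{a_0}\,p\log(kq)$ always dominates the gain $\sim(p-1)\log(kq)$, and this cannot be ``absorbed by the choice of $q$'': both terms scale linearly in $p\approx q/Q_\beta$, so enlarging $q$ (or $m$) does not restore summability. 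So the obstacle you flag at the end is not a technicality but the heart of the lemma, and your fallback (dyadic decomposition, better test points) is left entirely unexecuted.

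The paper resolves it differently: it proves the inclusion $F_k^{0,i}\subseteq\B(z_0,r_k)$ with $r_k^2=\sqrt{8}\,\Tst a_0^{-1}\log\bigl(\L\l^{\alpha(kq+i)}/|B(x_0,\sqrt{\Tst})|\bigr)$ — this requires the quasi-monotonicity estimate \eqref{quasimonotone}, which itself rests on \eqref{lessthan} and hence on the precise choice of $m$; note that your shortcut ``use $t_0-t\le\Tst$'' is not immediate, since the logarithmic factor in \eqref{parabound} grows as $t_0-t$ decreases — and then invokes the capacity-type estimate $\mu_{F}(F)\le C\,|B(x_0,r_k)|$ for compacts contained in a parabolic ball (\cite[Corollary 2.4 and Proposition 2.1]{LTU}). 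That route pays only the factor $|B(x_0,r_k)|/|B(x_0,\sqrt{\Tst})|\lesssim \bigl(\alpha(kq+p+i)-\alpha(kq+i)\bigr)^{Q/2}$, i.e.\ a power of $\log k$ rather than a power of $k$, and then the convergence argument you give in your last paragraph (which is essentially correct, granted the mass bound) goes through. As written, your proof is incomplete at exactly this point.
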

\begin{proof} 
By the subadditivity of the $\H$-balayage potential (recall the definition in \eqref{defbal}) we have
$$V_{\Ockq}\leq V_{F_k^{0,i}} + V_{F_k^i}.$$
Since we know the validity of \eqref{plusinf}, then the desired \eqref{enough} will be a consequence of the following
\begin{equation}\label{claimfin}
\sum_{k=1}^{+\infty}{V_{F_k^{0,i}}(z_0)}<+\infty.
\end{equation}
To prove \eqref{claimfin}, we need to understand how $F_k^{0,i}$ shrinks to $\{z_0\}$ as $k$ grows.\\
For any $z=(x,t)\in F_k^{0,i}\subset \Ockq$ with $z\neq z_0$, by \eqref{bounds} we have
\begin{equation}\label{parabound}
d^2(x_0,x)\leq \frac{t_0-t}{a_0}\log{\left(\frac{\Lambda \l^{\alpha(kq+i)}}{\left|B\left(x_0,\sqrt{t_0-t}\right)\right|} \right)}.
\end{equation}
We recall that $z\in F_k^{0,i}$ implies by definition that $0< t_0-t \leq \Tst$, and we know from \eqref{defTkappa} and Lemma \ref{findtstar} that
$$\left|B\left(x_0,\sqrt{t_0-t}\right)\right|\leq \left|B\left(x_0,\sqrt{\Tst}\right)\right|\leq \left|B\left(x_0,\sqrt{\Tq}\right)\right|\leq \L \l^{\alpha\left(kq+i\right)}.$$
On the other hand, by \eqref{deftst} and the choices for $q$ and $p$ in \eqref{defq} and \eqref{defpi}, we also get (by arguing as for \eqref{oneforall})
\begin{equation}\label{lessthan}
\left|B\left(x_0,\sqrt{\Tst}\right)\right| \leq 2 c_d \Lambda \l^{\alpha(kq+p+i)} < \Lambda\l^{\alpha(kq+i)} \min\left\{\frac{1}{c_d},e^{-\frac{Q}{2}}, e^{-\frac{a_0}{2}}\right\}.
\end{equation}
These inequalities, together with the doubling condition (\hyperref[diuno]{D2}) which says that $|B(x_0,\sqrt{s})|s_1^{\frac{Q}{2}}\leq c_d \left|B\left(x_0,\sqrt{s_1}\right)\right|s^{\frac{Q}{2}}$ for any $0\leq s_1\leq s$, allow to bound the term in \eqref{parabound}. In particular we claim that
\begin{equation}\label{quasimonotone}
(t_0-t)\log{\left(\frac{\Lambda \l^{\alpha(kq+i)}}{\left|B\left(x_0,\sqrt{t_0-t}\right)\right|} \right)}\leq 2\Tst \log{\left(\frac{\Lambda \l^{\alpha(kq+i)}}{\left|B\left(x_0,\sqrt{\Tst}\right)\right|}\right)}.
\end{equation} 
To prove \eqref{quasimonotone} we can write
\begin{eqnarray*}
&&2\Tst \log{\left(\frac{\Lambda \l^{\alpha(kq+i)}}{\left|B\left(x_0,\sqrt{\Tst}\right)\right|}\right)} - (t_0-t)\log{\left(\frac{\Lambda \l^{\alpha(kq+i)}}{\left|B\left(x_0,\sqrt{t_0-t}\right)\right|} \right)} \\
&&=\Tst\log{\left(\frac{\Lambda \l^{\alpha(kq+i)}}{\left|B\left(x_0,\sqrt{\Tst}\right)\right|}\right)} + \left(\Tst - (t_0-t)\right)\log{\left(\frac{\Lambda \l^{\alpha(kq+i)}}{\left|B\left(x_0,\sqrt{\Tst}\right)\right|}\right)} \\
&&- (t_0-t)\log{\left(\frac{\left|B\left(x_0,\sqrt{\Tst}\right)\right|}{\left|B\left(x_0,\sqrt{t_0-t}\right)\right|} \right)}.
\end{eqnarray*}
By the doubling condition and the concavity of the logarithmic function, we have
$$\log{\left(\frac{\left|B\left(x_0,\sqrt{\Tst}\right)\right|}{\left|B\left(x_0,\sqrt{t_0-t}\right)\right|} \right)}\leq \log(c_d) + \frac{Q}{2}\log{\left(\frac{\Tst}{t_0-t}\right)}\leq \log(c_d) + \frac{Q}{2}\left(\frac{\Tst-(t_0-t)}{t_0-t}\right).
$$
Putting together the last two relations we get the proof of \eqref{quasimonotone} since
\begin{eqnarray*}
&&2\Tst \log{\left(\frac{\Lambda \l^{\alpha(kq+i)}}{\left|B\left(x_0,\sqrt{\Tst}\right)\right|}\right)} - (t_0-t)\log{\left(\frac{\Lambda \l^{\alpha(kq+i)}}{\left|B\left(x_0,\sqrt{t_0-t}\right)\right|} \right)} \\
&&\geq  \Tst\left(\log{\left(\frac{\Lambda \l^{\alpha(kq+i)}}{\left|B\left(x_0,\sqrt{\Tst}\right)\right|}\right)} - \log{ c_d}\right) \\
&&+ \left(\Tst - (t_0-t)\right)\left(\log{\left(\frac{\Lambda \l^{\alpha(kq+i)}}{\left|B\left(x_0,\sqrt{\Tst}\right)\right|}\right)} - \frac{Q}{2}\right)\geq 0,
\end{eqnarray*}
where we used \eqref{lessthan} and $0\leq t_0-t\leq \Tst$. Therefore, from \eqref{parabound} and \eqref{quasimonotone}, we deduce that
$$d^2(x_0,x)\leq \frac{2\Tst}{a_0}\log{\left(\frac{\Lambda \l^{\alpha(kq+i)}}{\left|B\left(x_0,\sqrt{\Tst}\right)\right|} \right)}$$
for every $z=(x,t)\in F_k^{0,i}$. Moreover, again from \eqref{lessthan}, we also have
$$t_0-t\leq \Tst \leq \frac{2\Tst}{a_0}\log{\left(\frac{\Lambda \l^{\alpha(kq+i)}}{\left|B\left(x_0,\sqrt{\Tst}\right)\right|} \right)}.$$
The last two inequalities tells us that
$$z\in \B(z_0,r_k)\quad\mbox{ with }\quad r^2_k=\sqrt{8}\frac{\Tst}{a_0}\log{\left(\frac{\Lambda \l^{\alpha(kq+i)}}{\left|B\left(x_0,\sqrt{\Tst}\right)\right|} \right)}.$$
This holds true for any $z\in F_k^{0,i}$, i.e. we have just proved that
\begin{equation}\label{inclu}
F_k^{0,i}\subseteq \B(z_0,r_k).
\end{equation}
The representation formula proved in Theorem \ref{reprev} (note that we cannot use \eqref{quasidap} since the point $z_0\in\de F_k^{0,i}$) allows to deduce
$$V_{F_k^{0,i}}(z_0)=\int_{F_k^{0,i}}{\Gamma(z_0,\zeta)\,\rm{d}\mu_{F_k^{0,i}}(\zeta)}\leq\left(\frac{1}{\l}\right)^{\alpha(kq+i+1)}\mu_{F_k^{0,i}}\left(F_k^{0,i}\right).$$
Moreover, from the monotonicity with respect to the inclusion in \eqref{inclu} and the results in \cite[Corollary 2.4 and Proposition 2.1]{LTU}, we know that
$$\mu_{F_k^{0,i}}\left(F_k^{0,i}\right)\leq C |B(x_0,r_k)|$$
for some structural positive constant $C$. This says that
$$
\sum_{k=1}^{+\infty}{V_{F_k^{0,i}}(z_0)}  \leq C \sum_{k=1}^{+\infty}{\left(\frac{1}{\l}\right)^{\alpha(kq+i+1)}|B(x_0,r_k)|}.
$$
Exploiting the expression we found for $r_k$, the doubling property, and \eqref{deftst}, we get
\begin{eqnarray*}
&&\sum_{k=1}^{+\infty}{V_{F_k^{0,i}}(z_0)}  \leq C c_d \left(\frac{\sqrt{8}}{a_0}\right)^{\frac{Q}{2}} \sum_{k=1}^{+\infty}{\left(\frac{1}{\l}\right)^{\alpha(kq+i+1)}\left|B\left(x_0,\sqrt{\Tst}\right)\right|\log^{\frac{Q}{2}}{\left(\frac{\Lambda \l^{\alpha(kq+i)}}{\left|B\left(x_0,\sqrt{\Tst}\right)\right|} \right)}}\\
&& \leq 2C c^2_d\Lambda \left(\frac{\log\left(\frac{1}{\lambda}\right)\sqrt{8}}{a_0}\right)^{\frac{Q}{2}} \sum_{k=1}^{+\infty}{\left(\frac{1}{\l}\right)^{\alpha(kq+i+1)-\alpha(kq+p+i)}\left(\alpha(kq+p+i)-\alpha(kq+i)\right)^{\frac{Q}{2}}}.
\end{eqnarray*}
Hence, \eqref{claimfin} is proved if we ensure the convergence of the series at the right-hand side. We thus notice that the sequences $\alpha(kq+p+i)-\alpha(kq+i+1)$ and $\alpha(kq+p+i)-\alpha(kq+i)$ (recalling \eqref{defalfa}) are asymptotically equivalent respectively to $(p-1)\log(kq+p+i)$ and $p\log(kq+p+i)$. Hence, the series under investigation behaves like
$$\sum_{k=1}^{+\infty}{\frac{1}{\left(kq+p+i\right)^{(p-1)\log\frac{1}{\l}}}\log^{\frac{Q}{2}}(kq+p+i)},$$
which is convergent since $p\geq\frac{q}{Q_\beta}>1+\frac{1}{\log(\frac{1}{\l})}$ {by \eqref{defpi} and \eqref{defq}}. This proves \eqref{claimfin}, and therefore the lemma.
\end{proof}

In the following lemma we finally determine the required bound for the ratio $\frac{\Gamma(z,\zeta)}{\Gamma(z_0,\zeta)}$ for $z\in F^i_h$ and $\zeta\in F^i_k$. We do this by exploiting the H\"older continuity of the solutions to $\H u=0$ proved in \cite{LU}. It is not surprising to infer estimates for the fundamental solution or for the relevant Green kernel by using H\"older-type estimates (see the related results in \cite[Proposition 7.4]{LU} and \cite[Lemma 3.3]{LTU}, see also \cite{Ug, TUjd}). The novelty in the present situation is due to the special regions $F^i_k$, and it is strictly related with the careful choices for $q$ and $p$ in \eqref{defq} and \eqref{defpi}. We have the following
\begin{lemma}\label{rationew}
There exists a positive constant $M_0$ such that
$$\Gamma(z,\zeta)\leq M_0 \Gamma(z_0,\zeta) \quad\forall\,z\in F^i_h,\,\forall\,\zeta\in F^i_k,\quad\forall\,h,k\in\N,\,\,h\neq k.$$
\end{lemma}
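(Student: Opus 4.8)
The plan is to reduce the pointwise ratio bound to the parabolic H\"older estimate for $\H$-solutions by a Harnack-chain-type argument along the time direction, using that $F^i_h$ and $F^i_k$ are separated in time by the levels $T^*$. First I would fix $h\neq k$, say $h>k$ (the other case is symmetric after swapping the roles via the adjoint $\H^*$, which satisfies the same Gaussian bounds), and I would take $z=(x,t)\in F^i_h$ and $\zeta=(\xi,\tau)\in F^i_k$. By \eqref{sotto} we know $F^i_k$ lies strictly below $F^i_h$, so $t>\tau$; moreover, from the defining inequalities $\l^{-\alpha(kq+i)}\le\Gamma(z_0,\zeta)\le\l^{-\alpha(kq+i+1)}$ and the analogous ones for $z$, together with the two-sided Gaussian bounds \eqref{bounds}, one reads off that both $z$ and $\zeta$ lie in a controlled parabolic neighbourhood of $z_0$ whose "temporal size" is comparable to $T^*_{kq+i}$ for $\zeta$ and to $T_{hq+i}\le T^*_{kq+i}$ for $z$. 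The key geometric input is that the slab $\{t_0-T^*_{kq+i}\le t\le t_0\}$ separates the "past" part $F^i_k$ from $z$, and $T^*_{kq+i}$ is comparable (up to the structural constant $2c_d$) to $|B(x_0,\sqrt{T^*})|\sim\Lambda\l^{\alpha(kq+p+i)}$ by \eqref{deftst}.

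The heart of the argument is the application of the interior H\"older estimate \eqref{Hest}: for a nonnegative solution $u$ of $\H u=0$ in a parabolic ball, $|u(z)-u(z')|\le C\bigl(\d(z,z')/R\bigr)^{\beta}\,\sup u$ on a concentric ball of radius $R$. I would apply this to the function $u(\cdot)=\Gamma(\cdot,\zeta)$, which is $\H$-harmonic away from $\zeta$, hence in a full parabolic neighbourhood of the segment (in the sense of (D3)) joining $z$ to $z_0$, since both $z,z_0$ lie at time $\ge t_0-T^*_{kq+i}$ while $\zeta$ lies at time $\le t_0-T^*_{kq+i}$, so $\zeta$ is at parabolic distance $\gtrsim\sqrt{T^*_{kq+i}}$ from that segment. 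The segment property (D3) lets me chain: cover the $d$-geodesic from $x$ to $x_0$ (lifted to constant-ish time, or rather to the time slab) by $O(1)$ overlapping parabolic balls on which $\Gamma(\cdot,\zeta)$ is a nonnegative solution, apply the H\"older/Harnack estimate on each, and conclude $\Gamma(z,\zeta)\le M_0\Gamma(z_0,\zeta)$. The number of balls in the chain, and the comparability of the radii, must be controlled \emph{uniformly in} $h,k$: this is exactly where the choices of $q$ in \eqref{defq} and $p$ in \eqref{defpi} enter, because they guarantee (as in \eqref{oneforall} and \eqref{lessthan}) that the ratio of the relevant $d$-ball volumes across the slab is bounded by a fixed structural constant, so the parabolic distance $\d(z,z_0)$ is bounded by a fixed multiple of the "gap" $\sqrt{T^*_{kq+i}}$, making the chain length $O(1)$.

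The main obstacle I anticipate is precisely this uniformity: one must show that $\d(z,z_0)^2$ is bounded by a structural constant times $T^*_{kq+i}$, uniformly over all $h\neq k$ and all admissible $z\in F^i_h$, $\zeta\in F^i_k$. For $z\in F^i_h$ with $h>k$ this is delicate because $z$ could a priori be far in space from $x_0$; but the Gaussian lower bound forces $d(x_0,x)^2\lesssim (t_0-t)\log\bigl(\Lambda\l^{\alpha(hq+i)}/|B(x_0,\sqrt{t_0-t})|\bigr)$, which is the analogue of \eqref{parabound}, and then the same doubling/concavity manipulation used to prove \eqref{quasimonotone} and \eqref{lessthan}—together with $t_0-t\le T_{hq+i}\le T^*_{kq+i}$ and the volume comparisons forced by the large-gap inequality $\l^{-(\alpha(kq+p+i)-\alpha(kq+i))}>$ (structural constant)—yields $\d(z,z_0)^2\le$ (structural constant)$\cdot T^*_{kq+i}$, hence a bound of the form $d(x_0,x)\le C\sqrt{T^*_{kq+i}}$ and $(t_0-t)\le T^*_{kq+i}$. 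The case $h<k$ is handled symmetrically by instead moving $\zeta$ (now the "upper" set is $F^i_k$, so $\zeta$ is close to $z_0$) and using that $\Gamma(z,\cdot)$ is an $\H^*$-solution with the same Gaussian bounds; once the geometric separation is in hand, the H\"older-chain estimate and the two-sided Gaussian bounds give $M_0$ depending only on $\beta$, $c_d$, $\Lambda$, $a_0$, $b_0$, $\l$ and the structural constant $C$ from \eqref{Hest}.
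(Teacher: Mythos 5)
Your geometric containment step is on the right track: the bound $\hat d(z,z_0)^2\le C\,T^*_{kq+i}$ for $z\in F^i_h$, $h\ge k+1$, is essentially the paper's claim \eqref{Fhcista}, obtained as you indicate from the Gaussian lower bound, doubling, and the choices of $q,p$. (As an aside, the case $h<k$ needs no adjoint argument: by \eqref{sotto} $z$ then lies strictly below $\zeta$ in time, so $\Gamma(z,\zeta)=0$ and the inequality is trivial.) The genuine gap is in your concluding step. The estimate \eqref{Hest} is an oscillation bound, $|u(z)-u(z_0)|\le C_0\max_{\overline{\mathcal{C}}_{r}}|u|\,\bigl(\hat d(z,z_0)/r\bigr)^\beta$, not a Harnack inequality, and no Harnack inequality is among the tools of this axiomatic framework; so an $O(1)$ chain of such estimates, with $\hat d(z,z_0)$ merely \emph{comparable} to $r\sim\sqrt{T^*_{kq+i}}$, cannot yield $\Gamma(z,\zeta)\le M_0\Gamma(z_0,\zeta)$. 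Indeed, after normalizing to $v_\zeta=\Gamma(\cdot,\zeta)/\Gamma(z_0,\zeta)$ (so that $v_\zeta(z_0)=1$), the best available sup bound on the cylinder $\mathcal{C}_{r_k}$, $r_k^2=\tfrac15 T^*_{kq+i}$, is \eqref{inizboundv}, namely $\max v_\zeta\le c_d5^{Q/2}\lambda^{-(\alpha(kq+p+i)-\alpha(kq+i))}$, a quantity that blows up as $k\to\infty$; comparability of $\hat d(z,z_0)$ with $r_k$ then gives only $|v_\zeta(z)-1|\lesssim\lambda^{-(\alpha(kq+p+i)-\alpha(kq+i))}$, which is useless. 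A Harnack-chain alternative would not rescue this: points of $F^i_h$ can lie at times arbitrarily close to $t_0$ while at spatial distance comparable to $\sqrt{T^*_{kq+i}}$ from $x_0$, so the forward-in-time waiting condition $t_0-t\gtrsim d(x,x_0)^2$ needed to compare $u(z)$ with $u(z_0)$ is not available, and the chain constants would degenerate.

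What the paper actually does, and what is missing from your proposal, is a quantitative balance rather than a comparability statement: from \eqref{bellaquesta} and \eqref{doubletime} one gets the \emph{decay} $\bigl(\hat d(z,z_0)/r_k\bigr)^\beta\lesssim\lambda^{\frac{\beta}{Q}(\alpha(hq+i)-\alpha(kq+p+i))}$, and the product of this with the growing factor $\lambda^{-(\alpha(kq+p+i)-\alpha(kq+i))}$ stays bounded uniformly in $k$ and $h\ge k+1$ precisely because $q>\bigl(\frac{Q}{\beta}+1\bigr)p$ by \eqref{defpi}: the H\"older exponent $\beta$ must beat the loss of order $p$ between the levels $\alpha(kq+i)$ and $\alpha(kq+p+i)$, as in \eqref{finallybounded}. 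In your writeup the roles of $q$ and $p$ are confined to the geometric containment (structural bounds on volume ratios and on $\hat d(z,z_0)/\sqrt{T^*_{kq+i}}$), so this decisive competition between the smallness of $(\hat d/r_k)^\beta$ and the growth of the normalized sup is never set up, and the uniform constant $M_0$ does not follow from the steps you describe.
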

\begin{proof}
Fix any $h,k\in \N$ with $h\neq k$. If $h\leq k-1$, then by \eqref{sotto} and \eqref{bounds} we have $\Gamma(z,\zeta)=0$, and the statement is trivial. Thus, suppose $h\geq k+1$.\\
Let us notice that, for any $\zeta\in F^i_k$, the function $z\mapsto v_\zeta(z)= \frac{\Gamma(z,\zeta)}{\Gamma(z_0,\zeta)}$ is a solution to $\H v_\zeta=0$ outside $F^i_k$. We know from \cite[Theorem 7.2]{LU} that, if $u$ is a solution to $\H u=0$ in $\mathcal{C}_r=B(x_0,r)\times(t_0-r^2,t_0)$, then we have
\begin{equation}\label{Hest}
|u(z)-u(z')|\leq C_0 \max_{\overline{\mathcal{C}}_r}{|u|}\left(\frac{\d(z,z')}{r}\right)^\beta\qquad\forall\, z,z'\in \overline{\mathcal{C}}_{\frac{r}{2}}
\end{equation}
for some constant $C_0>0$ and $\beta\in (0,1)$. The constants $\beta$ and $C_0$ depend just on the constants $\Lambda, a_0, b_0$ in the Gaussian bounds \eqref{bounds} and on the doubling constant $c_d$ of the metric $d$. We want to use the estimate \eqref{Hest} for the function $v_\zeta$ defined above in the cylinder $\mathcal{C}_{r_k}$ with the choice
\begin{equation}\label{choicerk}
r^2_k=\frac{1}{5}\Tst.
\end{equation}
Since $\mathcal{C}_{r_k}\subset \RN \times (t_0-\Tst,t_0)$, we have in fact that $v_\zeta$ is a solution to $\H v_\zeta=0$ in $\mathcal{C}_{r_k}$. Let us then estimate $\max_{\overline{\mathcal{C}}_{r_k}}{|v_\zeta|}$. To do this, we use the definitions of the sets $F^i_k\subset \Ockq$ together with \eqref{bounds} which yield
\begin{eqnarray*}
0\leq v_\zeta(z)= \frac{\Gamma(z,\zeta)}{\Gamma(z_0,\zeta)} &\leq& \Lambda \frac{\lambda^{\alpha(kq+i)}}{|B(x,\sqrt{t-\tau})|}\leq \Lambda \frac{\lambda^{\alpha(kq+i)}}{|B(x_0,\sqrt{t-\tau}-d(x,x_0))|}\\
&\leq& \Lambda \frac{\lambda^{\alpha(kq+i)}}{|B(x_0,\frac{1}{2}\sqrt{t-\tau})|}  \qquad \mbox{ for any }\zeta\in F^i_k,\, z\in \overline{\mathcal{C}}_{r_k},
\end{eqnarray*}
where the last inequality is justified by the fact that $t-\tau\geq \Tst-r_k^2=4r_k^2$ by \eqref{choicerk}. From the inequality $t-\tau\geq \Tst-r_k^2=\frac{4}{5}\Tst$, the doubling condition and \eqref{deftst}, we also get
\begin{equation}\label{inizboundv}
v_\zeta(z)\leq \Lambda \frac{\lambda^{\alpha(kq+i)}}{\left|B(x_0,\frac{1}{\sqrt{5}}\sqrt{\Tst})\right|}\leq c_d 5^{\frac{Q}{2}} \frac{\lambda^{\alpha(kq+i)}}{\lambda^{\alpha(kq+p+i)}} \qquad \mbox{ for any }\zeta\in F^i_k,\, z\in \overline{\mathcal{C}}_{r_k}.
\end{equation}
We now claim that
\begin{equation}\label{Fhcista}
F^i_h\subseteq \overline{\mathcal{C}}_{\frac{r_k}{2}}\qquad\forall h\geq k+1. 
\end{equation}
To prove this claim, we first consider the inclusion
\begin{equation}\label{timecista}
[t_0-T_{hq+1}, t_0-T^*_{hq+i}]\subseteq \left[t_0-\frac{r_k^2}{4}, t_0\right]
\end{equation}
which is valid since $T_{hq+i}\leq \frac{r_k^2}{4}=\frac{1}{20}\Tst$. In fact, the doubling condition, \eqref{defTkappa} and \eqref{deftst} yield
\begin{equation}\label{doubletime}
\left(\frac{T_{hq+i}}{\Tst}\right)^{\frac{Q}{2}}\leq c_d\frac{\left|B(x_0,\sqrt{T_{hq+i}})\right|}{\left|B(x_0,\sqrt{\Tst})\right|}\leq c_d \frac{\lambda^{\alpha(hq+i)}}{\lambda^{\alpha(kq+p+i)}}\leq \left(\frac{1}{20}\right)^{\frac{Q}{2}}
\end{equation}
where the last inequality holds true because of our choices for $q$ and $p$ in \eqref{defq} and \eqref{defpi} since
\begin{eqnarray*}
\alpha(hq+i)-\alpha(kq+p+i)&\geq& \alpha(kq+q+i)-\alpha(kq+p+i)\geq \alpha(2q)-\alpha(q+p)\\
&\geq& q\log\left(\frac{2}{1+\frac{p}{q}}\right)\geq q\log\left(\frac{2Q_\beta}{Q_\beta+2}\right)>\frac{\log(c_d20^{\frac{Q}{2}})}{\log{\left(\frac{1}{\l}\right)}}.
\end{eqnarray*}
On the other hand, for any fixed $z\in F^i_h$, we have by \eqref{bounds} that
$$d^2(x_0,x)\leq \frac{t_0-t}{a_0}\log{\left(\frac{\Lambda \l^{\alpha(hq+i)}}{\left|B\left(x_0,\sqrt{t_0-t}\right)\right|} \right)}.$$
Using
$$
\left(\frac{t_0-t}{\Tst}\right)^{\frac{Q}{2}}\leq c_d\frac{\left|B(x_0,\sqrt{t_0-t})\right|}{\left|B(x_0,\sqrt{\Tst})\right|}
$$
together with \eqref{deftst} and the fact that $\max_{s\in[0,C]}{s^{\frac{2}{Q}}\log\left(\frac{C}{s}\right)}=\frac{Q}{2e}C^{\frac{2}{Q}}$, we deduce
\begin{eqnarray}\label{bellaquesta}
d^2(x_0,x)&\leq& \frac{\Tst c_d^{\frac{2}{Q}}}{a_0 \left|B(x_0,\sqrt{\Tst})\right|^\frac{2}{Q}}\left|B(x_0,\sqrt{t_0-t})\right|^{\frac{2}{Q}}\log{\left(\frac{\Lambda \l^{\alpha(hq+i)}}{\left|B\left(x_0,\sqrt{t_0-t}\right)\right|} \right)}\nonumber\\
&\leq& \frac{Q c_d^{\frac{2}{Q}}}{2ea_0}\frac{\lambda^{\frac{2}{Q}\alpha(hq+i)}}{\lambda^{\frac{2}{Q}\alpha(kq+p+i)}}\Tst.
\end{eqnarray}
Hence, we can affirm that
\begin{equation}\label{spacecista}
x\in B\left(x_0,\frac{1}{2}r_k\right)
\end{equation}
since with our choices for $q$ and $p$ in \eqref{defq} and \eqref{defpi} we have
$$\frac{Q c_d^{\frac{2}{Q}}}{2ea_0}\frac{\lambda^{\frac{2}{Q}\alpha(hq+i)}}{\lambda^{\frac{2}{Q}\alpha(kq+p+i)}}\Tst\leq \frac{r_k^2}{4}=\frac{1}{20}\Tst$$
because of the validity of the chain of inequalities
$$\alpha(hq+i)-\alpha(kq+p+i)\geq q\log\left(\frac{2Q_\beta}{Q_\beta+2}\right)>\frac{\log(c_d\left(\frac{10Q}{ea_0}\right)^{\frac{Q}{2}})}{\log{\left(\frac{1}{\l}\right)}}.$$
The claim \eqref{Fhcista} is thus a consequence of \eqref{timecista} and \eqref{spacecista}.\\
Therefore, for any $\zeta\in F^i_k$ we can apply in the cylinder $\mathcal{C}_{r_k}$ the estimate \eqref{Hest} to the function $v_\zeta$ with $z'=z_0$ and $z\in F^i_h$, and we get by \eqref{inizboundv} and \eqref{Fhcista}
\begin{equation}\label{daholderawiener}
|v_\zeta(z)-v_\zeta(z_0)|\leq C_0c_d 5^{\frac{Q}{2}} \frac{\lambda^{\alpha(kq+i)}}{\lambda^{\alpha(kq+p+i)}} \left(\frac{d^4(x,x_0)+(t_0-t)^2}{r^4_k}\right)^\frac{\beta}{4}.
\end{equation}
Keeping in mind that $z\in F^i_h$ and \eqref{choicerk}, we have by \eqref{bellaquesta}
$$
\frac{d^2(x,x_0)}{r_k^2}\leq \frac{5 c_d^{\frac{2}{Q}} Q}{2ea_0}\frac{\lambda^{\frac{2}{Q}\alpha(hq+i)}}{\lambda^{\frac{2}{Q}\alpha(kq+p+i)}},
$$
and by \eqref{doubletime}
$$
\left(\frac{t_0-t}{r_k^2}\right)^{\frac{Q}{2}}\leq\left(\frac{5T_{hq+i}}{\Tst}\right)^{\frac{Q}{2}}\leq 5^{\frac{Q}{2}} c_d \frac{\lambda^{\alpha(hq+i)}}{\lambda^{\alpha(kq+p+i)}}.
$$
Hence, recalling also that $v_\zeta(z_0)=1$, from \eqref{daholderawiener} we deduce that the following holds
\begin{eqnarray}\label{finallybounded}
&&|v_\zeta(z)-1|\leq C_0c_d^{1+\frac{\beta}{Q}} 5^{\frac{Q}{2}+\frac{\beta}{2}} \frac{\lambda^{\alpha(kq+i)}}{\lambda^{\alpha(kq+p+i)}} \left(\frac{Q^2}{4e^2a^2_0}\frac{\lambda^{\frac{4}{Q}\alpha(hq+i)}}{\lambda^{\frac{4}{Q}\alpha(kq+p+i)}} + \frac{\lambda^{\frac{4}{Q}\alpha(hq+i)}}{\lambda^{\frac{4}{Q}\alpha(kq+p+i)}}\right)^\frac{\beta}{4}\nonumber\\
&\leq& C \frac{\lambda^{\frac{\beta}{Q}\alpha(hq+i)}}{\lambda^{\frac{\beta}{Q}\alpha(kq+p+i)}} \frac{\lambda^{\alpha(kq+i)}}{\lambda^{\alpha(kq+p+i)}}\leq  C \frac{\lambda^{\frac{\beta}{Q}\alpha(kq+q+i)}}{\lambda^{\frac{\beta}{Q}\alpha(kq+p+i)}} \frac{\lambda^{\alpha(kq+i)}}{\lambda^{\alpha(kq+p+i)}}
\end{eqnarray} 
for all $z\in F^i_h$ and $\zeta\in F^i_k$, and for all $h\geq k+1$ (for some structural positive constant $C$). Our aim is to bound the right-hand side uniformly in $k$. In this respect, 
since $\alpha(n + s)-\alpha(n)$ is asymptotically equivalent {(recalling \eqref{defalfa})} to $s\log(n+s)$ as $n$ goes to $\infty$, we notice that 
$$\frac{\lambda^{\frac{\beta}{Q}\alpha(kq+q+i)}}{\lambda^{\frac{\beta}{Q}\alpha(kq+p+i)}} \frac{\lambda^{\alpha(kq+i)}}{\lambda^{\alpha(kq+p+i)}} \quad \mbox{ behaves like } \quad \frac{(kq+p+i)^{p\log(\frac{1}{\l})}}{(kq+q+i)^{\frac{\beta}{Q}(q-p)\log(\frac{1}{\l})}}$$
which is convergent to $0$ as $k\rightarrow+\infty$ since we have taken
$$q>\left(\frac{Q}{\beta}+1\right)p$$
in \eqref{defpi}. In particular, the terms in \eqref{finallybounded} are uniformly bounded by an absolute constant $M$. Therefore, by recalling the definition of $v_\zeta$ and \eqref{finallybounded}, we finally get
$$\frac{\Gamma(z,\zeta)}{\Gamma(z_0,\zeta)}=1 + v_\zeta(z)-1\leq 1 + M $$
for all $z\in F^i_h$ and $\zeta\in F^i_k$, and for all $h\neq k$.
\end{proof}
We are now ready to conclude the proof of the sufficient condition for the regularity in Theorem \ref{iff}. Assuming \eqref{plusinf}, we have defined in \eqref{defFk} a sequence of compact sets $\{F^i_k\}_{k\in\N}$ which are mutually disjoint by \eqref{sotto} and such that they shrink to the point $\{z_0\}$ as $k$ grows by \eqref{shrink}. Moreover, by Lemma \ref{sommadeglialtri} and Lemma \ref{rationew}, we have also that
$$\sum_{k=1}^{\infty}{V_{F_k^i}(z_0)}=+\infty,\qquad\mbox{and}$$
$$\sup{\left\{\frac{\Gamma(z,\zeta)}{\Gamma(z_0,\zeta)}\,:\,z\in F^i_h,\,\zeta\in F^i_k\right\}}\leq M_0\quad\forall h\neq k.$$
Therefore, we can proceed verbatim as in \cite[Lemma 6.1]{KLT} and we deduce that
\begin{equation}\label{farfrom0}
V_{\O'_r(z_0)}(z_0)\geq \frac{1}{2M_0}\quad\, \mbox{for every positive }r,
\end{equation}
where
\begin{equation}\label{defOprimoz}
\O'_r(z_0)=\left\{ z\in  S\smallsetminus \O \,:\, t\leq t_0, \quad \d(z,z_0)\leq r\right\}.
\end{equation}
We remark that in the proof it is needed the expression of the balayage in terms of its Riesz-representative as showed in Section \ref{sec2}. Once we have \eqref{farfrom0}, the $\H$-regularity of $z_0$ follows then from the characterization in \cite[Theorem 4.6]{LU}.

Let us turn to the proof of the necessary condition for the regularity in Theorem \ref{iff}. We assume then by contradiction that 
$$\sum_{k=1}^{\infty}{V_{\Omega^c_k(z_0)}(z_0)}<+\infty.$$
We want to prove that $z_0$ is not regular. For every $0<\e<\frac{1}{2}$ we have the existence of $L\in \N$ such that $\sum_{k=L}^{\infty}{V_{\Omega^c_k(z_0)}(z_0)}\leq\e$. For any $r>0$, recalling the definition of $\O'_r(z_0)$ in \eqref{defOprimoz}, we can write $\O'_r(z_0)=\O^L_r\cup \O_r^{*L}$ where
$$\O^L_r=\O'_r(z_0)\cap \left\{\Gamma(z_0,\cdot) \geq \left( \frac{1}{\lambda} \right)^{L \log L } \right\}\cup\{z_0\}\quad\mbox{and}\quad
\O_r^{*L}=\O'_r(z_0)\cap \left\{ \Gamma(z_0,\cdot) \leq \left( \frac{1}{\lambda} \right)^{L \log L } \right\}.$$
By definition $\O^L_r\subseteq \bigcup_{k=L}^\infty \Omega_k^c(z_0)$. Then, we get by the sub-additivity of the $\H$-balayage 
$$V_{\O'_r(z_0)} (z_0)\leq V_{\O_r^{*L}} (z_0) + V_{\O^L_r} (z_0) \leq V_{\O_r^{*L}} (z_0) + \sum_{k=L}^{\infty}{V_{\Omega^c_k(z_0)}(z_0)} \leq V_{\O_r^{*L}} (z_0) + \e.$$
This holds true for all $r>0$. By using the representation in Theorem \ref{reprev}, we thus have 
\begin{equation}\label{estbalepsplus}
V_{\O'_r(z_0)} (z_0)\leq \e+\int_{\O_r^{*L}}{\Gamma(z_0,\zeta)\,\rm{d}\mu_{\O_r^{*L}}(\zeta)}\leq \e+\l^{-L \log L }\mu_{\O_r^{*L}}\left(\O_r^{*L}\right).
\end{equation}
We stress that the representation result of Theorem \ref{reprev} is used here precisely at the point $z_0$ which belongs to $\de\O_r^{*L}$ for every $L$: the almost everywhere representation in \eqref{quasidap} would not be enough to deduce the previous estimate.\\
Since $\O_r^{*L}\subseteq \O'_r(z_0)\subset \overline{\B(z_0,r)}$, we can use \cite[Corollary 2.4 and Proposition 2.1]{LTU} to deduce that
$$\l^{-L \log L }\mu_{\O_r^{*L}}\left(\O_r^{*L}\right)\leq c \l^{-L \log L } |B(x_0,r)|<\frac{1}{2}.$$
where the last inequality follows from \cite[equation (2.2)]{LU} provided that $r$ is sufficiently small. Recalling \eqref{estbalepsplus}, this yields
$$V_{\O'_r(z_0)} (z_0)< \e + \frac{1}{2}<1 \quad\mbox{for small }r$$
which says that $z_0$ is not regular by \cite[Theorem 4.6]{LU}. The proof of Theorem \ref{iff} is thus complete.

\section{Corollaries and applications}\label{sec4}

As a first corollary of Theorem \ref{iff}, we want to read the sufficient and the necessary condition for the $\H$-regularity in terms of a series of capacitary terms. In contrast with the classical Wiener criteria, the necessary and sufficient conditions are here different. This is due to the presence of $\alpha(k)=k\log{k}$ in the definition of $\O_{k}^c(z_0)$ (see also Remark \ref{remarkall}).\\
For any compact set $F\subset S$, let us define the capacity of $F$ as
$$\mathrm{cap}_\H(F)=\mu_F(F),$$
where $\mu_F$ is the Riesz-measure associated to $V_F$.

\begin{corollary}\label{infondo}
Let $\Omega$ be a bounded open set with $\overline{\O}\subseteq S$, and $z_0 \in \de\Omega$. The following statements hold:
\begin{itemize} \item[$(i)$] if
$$\sum_{k=1}^{\infty}{\frac{\mathrm{cap}_\H (\Omega^c_k(z_0))}{\l^{k\log{k}}}}=+\infty$$
then $z_0$ is $\H$-regular;
\item[$(ii)$]  if $z_0$ is $\H$-regular then
$$\sum_{k=1}^{\infty}{\frac{\mathrm{cap}_\H (\Omega^c_k(z_0))}{\l^{(k+1)\log{(k+1)}}}}=+\infty.$$
\end{itemize}
\end{corollary}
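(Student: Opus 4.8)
The plan is to deduce both statements of Corollary~\ref{infondo} from Theorem~\ref{iff} together with the everywhere representation $V_F=\Gamma\ast\mu_F$ of Theorem~\ref{reprev}, by comparing the balayage term $V_{\Omega^c_k(z_0)}(z_0)$ with the capacity $\mathrm{cap}_\H(\Omega^c_k(z_0))=\mu_{\Omega^c_k(z_0)}(\Omega^c_k(z_0))$. The key observation is that for $\zeta\in\Omega^c_k(z_0)$ one has, by the very definition \eqref{defOmk} of the level set,
\[
\l^{-\alpha(k)}\leq\Gamma(z_0,\zeta)\leq\l^{-\alpha(k+1)}
\]
(the value at $\zeta=z_0$ being irrelevant since $\mu_{\Omega^c_k(z_0)}$ puts no mass on a single point, as $\mu_F$ is absolutely continuous with respect to a measure controlled by $|B(x_0,r)|$ via \cite[Corollary 2.4 and Proposition 2.1]{LTU}). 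Integrating this two-sided bound against $\mu_{\Omega^c_k(z_0)}$ and using Theorem~\ref{reprev} at the point $z_0\in\de\Omega^c_k(z_0)$ gives
\[
\l^{-\alpha(k)}\,\mathrm{cap}_\H(\Omega^c_k(z_0))\;\leq\;V_{\Omega^c_k(z_0)}(z_0)=\int_{\Omega^c_k(z_0)}\Gamma(z_0,\zeta)\,{\rm d}\mu_{\Omega^c_k(z_0)}(\zeta)\;\leq\;\l^{-\alpha(k+1)}\,\mathrm{cap}_\H(\Omega^c_k(z_0)).
\]

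With this sandwich in hand, statement $(i)$ follows immediately: if $\sum_k \l^{-\alpha(k)}\mathrm{cap}_\H(\Omega^c_k(z_0))=+\infty$, then the left inequality forces $\sum_k V_{\Omega^c_k(z_0)}(z_0)=+\infty$, and Theorem~\ref{iff} yields $\H$-regularity of $z_0$. Likewise statement $(ii)$ follows from the right inequality: if $z_0$ is $\H$-regular, Theorem~\ref{iff} gives $\sum_k V_{\Omega^c_k(z_0)}(z_0)=+\infty$, and since $V_{\Omega^c_k(z_0)}(z_0)\leq \l^{-\alpha(k+1)}\mathrm{cap}_\H(\Omega^c_k(z_0))$ we conclude $\sum_k \l^{-\alpha(k+1)}\mathrm{cap}_\H(\Omega^c_k(z_0))=+\infty$, which upon recalling $\alpha(k+1)=(k+1)\log(k+1)$ is exactly the asserted divergence.

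The only genuinely nontrivial point — and the place where Theorem~\ref{reprev} is essential rather than the a.e.\ identity \eqref{quasidap} — is that the representation $V_{\Omega^c_k(z_0)}=\Gamma\ast\mu_{\Omega^c_k(z_0)}$ must hold precisely at $z_0$, which lies on $\de\Omega^c_k(z_0)$; this is exactly the subtlety flagged after Theorem~\ref{reprev} and already used in the proof of Theorem~\ref{iff}. Apart from that, one should note the harmless discrepancy of the two exponents $\alpha(k)$ and $\alpha(k+1)$ in the sandwich: this is intrinsic and is the reason the necessary and sufficient conditions in the corollary do not coincide, precisely as anticipated in Remark~\ref{remarkall}. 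I would also remark that the capacity is genuinely finite for each $k$ (so the series has nonnegative terms and the statements make sense) because $\Omega^c_k(z_0)$ is compact and $\mu_F(F)\leq C|B(x_0,r)|<\infty$ for $F\subset\overline{\B(z_0,r)}$ by \cite[Corollary 2.4 and Proposition 2.1]{LTU}.
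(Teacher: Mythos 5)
Your argument is exactly the paper's proof: represent $V_{\Omega^c_k(z_0)}(z_0)=\int_{\Omega^c_k(z_0)}\Gamma(z_0,\zeta)\,{\rm d}\mu_{\Omega^c_k(z_0)}(\zeta)$ via Theorem \ref{reprev} (crucially at $z_0\in\de\Omega^c_k(z_0)$), sandwich it between $\l^{-k\log k}\,\mathrm{cap}_\H(\Omega^c_k(z_0))$ and $\l^{-(k+1)\log(k+1)}\,\mathrm{cap}_\H(\Omega^c_k(z_0))$ using the definition of the level set, and conclude with Theorem \ref{iff}. Your extra remark that $\mu_{\Omega^c_k(z_0)}$ charges no single point is a harmless refinement the paper passes over as ``trivial'' (and if one wants it, it follows more directly from $\Gamma\ast\mu_F\le 1$ together with the lower Gaussian bound, rather than from the capacity estimates of \cite{LTU}); otherwise the two proofs coincide.
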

\begin{proof}
By Theorem \ref{reprev} we can write, for any $k\in\N$,
$$V_{\Omega^c_k(z_0)}(z_0)=\Gamma\ast\mu_{\Omega^c_k(z_0)}(z_0)=\int_{\Omega^c_k(z_0)}\Gamma(z_0,\zeta)\,{\rm d}\mu_{\Omega^c_k(z_0)}(\zeta).$$
On the other hand, by definition of $\Omega^c_k(z_0)$ and of capacity, we trivially have
$$\frac{\mathrm{cap}_\H (\Omega^c_k(z_0))}{\l^{k\log{k}}}\leq \int_{\Omega^c_k(z_0)}\Gamma(z_0,\zeta)\,{\rm d}\mu_{\Omega^c_k(z_0)}(\zeta)\leq\frac{\mathrm{cap}_\H (\Omega^c_k(z_0))}{\l^{(k+1)\log{(k+1)}}} .$$
The proof of the statements is then straightforward from the characterization of Theorem \ref{iff}.
\end{proof}

Let us mention that other definitions of capacities related to $\H$ are possible and they are discussed, e.g., in \cite[Section 2]{LTU}. For example, one can deal with capacities with respect to Gaussian kernels $\G_{a}(\cdot,\cdot)$ which, in turn, can be estimated in terms of the Lebesgue measure (see \cite[Proposition 2.5]{LTU}). We can then obtain the following sufficient condition for the regularity which is more geometric and easier to be tested with respect to condition $(i)$ in Corollary \ref{infondo} (see also \cite[Corollary 1.3]{KLT}).

\begin{corollary}\label{infondobis}
Let $\Omega$ be a bounded open set with $\overline{\O}\subseteq S$, and $z_0 \in \de\Omega$. If, for some $\lambda\in(0,1)$, we have
$$\sum_{k=1}^{\infty}{\frac{|\Omega^c_k(z_0)|}{T_k \l^{k\log{k}}}}=+\infty$$
then the point $z_0$ is $\H$-regular for $\de\Omega$. In particular, $z_0$ is $\H$-regular for $\de\Omega$ if
\begin{equation}\label{suffcondmeasure}
\sum_{k=1}^{\infty}{\frac{|\Omega^c_k(z_0)|}{\l^{\frac{Q+2}{Q}k\log{k}}}}=+\infty.
\end{equation}
\end{corollary}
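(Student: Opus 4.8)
The plan is to deduce Corollary~\ref{infondobis} from part $(i)$ of Corollary~\ref{infondo} by estimating the capacity $\mathrm{cap}_\H(\Omega^c_k(z_0))$ from below in terms of the Lebesgue measure $|\Omega^c_k(z_0)|$ and the parameter $T_k$. First I would recall from the definition \eqref{defOmk} that on $\Omega^c_k(z_0)$ we have $\Gamma(z_0,\cdot)\geq \l^{-\alpha(k)}$, and by the Gaussian lower bound in \eqref{bounds} the value of $\Gamma(z_0,\zeta)$ is comparable to $|B(x_0,\sqrt{t_0-\tau})|^{-1}$ up to the Gaussian exponential factor; in particular, since $\Omega^c_k(z_0)\subset E_k(z_0)$, every $\zeta=(\xi,\tau)\in\Omega^c_k(z_0)$ satisfies $t_0-\tau\leq T_k$. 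Combining these facts with the doubling property (\hyperref[didue]{D2}) gives a lower bound of the form $\mathrm{cap}_\H(\Omega^c_k(z_0))\geq c\,|\Omega^c_k(z_0)|/\big(T_k\,\l^{\alpha(k)}\big)\cdot\l^{\alpha(k)}$, i.e. the capacitary term controlling $\H$-regularity in Corollary~\ref{infondo}(i), namely $\mathrm{cap}_\H(\Omega^c_k(z_0))/\l^{\alpha(k)}$, is bounded below by a structural constant times $|\Omega^c_k(z_0)|/(T_k\,\l^{2\alpha(k)})$ --- so I should be careful to track the powers of $\l$ and arrive precisely at $|\Omega^c_k(z_0)|/(T_k\l^{\alpha(k)})$.

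Concretely, the mechanism for the lower bound on the capacity is to exploit the comparison between $\mathrm{cap}_\H$ and the capacity with respect to the Gaussian kernel $\G_{a}$ discussed in \cite[Section 2]{LTU}, together with \cite[Proposition 2.5]{LTU}, which estimates the Gaussian capacity of a compact set $F\subset \overline{\B(z_0,r)}$ from below by (a constant multiple of) $|F|$ divided by the relevant time-scale and the measure of the spatial ball. The point is that for $\zeta\in\Omega^c_k(z_0)$ one has $\l^{\alpha(k)}\leq \G$-value $\cdot\,\Lambda$ roughly, so that $|B(x_0,\sqrt{t_0-\tau})|\lesssim \Lambda\l^{\alpha(k)}$, and then $|\Omega^c_k(z_0)|\lesssim T_k\cdot \sup_\zeta |B(x_0,\sqrt{t_0-\tau})|\cdot(\text{something})$; inverting this chain of inequalities yields $\mathrm{cap}_\H(\Omega^c_k(z_0))\gtrsim |\Omega^c_k(z_0)|/(T_k\l^{\alpha(k)})$. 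Dividing by $\l^{\alpha(k)}$ and invoking Corollary~\ref{infondo}(i) gives the first assertion: if $\sum_k |\Omega^c_k(z_0)|/(T_k\l^{\alpha(k)})=+\infty$ then $z_0$ is $\H$-regular.

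For the second, more explicit, assertion \eqref{suffcondmeasure}, I would bound $T_k$ from above. Since $\Omega^c_k(z_0)\subset E_{k+1}(z_0)$ trivially contains no points with $\Gamma(z_0,\cdot)>\l^{-\alpha(k+1)}$, the Gaussian estimate \eqref{bounds} forces $|B(x_0,\sqrt{t_0-\tau})|\geq \Lambda^{-1}\l^{\alpha(k+1)}$ for $\zeta\in\Omega^c_k(z_0)$, hence --- but more to the point, using \eqref{defTkappa}-type reasoning applied to $E_k(z_0)$ --- $|B(x_0,\sqrt{T_k})|\leq \Lambda\l^{\alpha(k)}$, and then by the doubling condition one converts this measure bound into a bound on $T_k$ itself of the form $T_k\leq C\,(\l^{\alpha(k)})^{2/Q}$ up to the normalization of $|B(x_0,1)|$ and structural constants. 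Plugging this into the first criterion replaces $T_k\l^{\alpha(k)}$ by $C\l^{(1+2/Q)\alpha(k)}=C\l^{\frac{Q+2}{Q}\alpha(k)}$, and the divergence of $\sum_k |\Omega^c_k(z_0)|/\l^{\frac{Q+2}{Q}k\log k}$ therefore implies the divergence of $\sum_k |\Omega^c_k(z_0)|/(T_k\l^{k\log k})$, yielding $\H$-regularity. I expect the main (minor) obstacle to be bookkeeping the interplay between the volume-doubling exponent $Q$ and the time variable: one must pass from a bound on $|B(x_0,\sqrt{T_k})|$ to a bound on $T_k^{Q/2}$, which is exactly where the exponent $\frac{Q+2}{Q}$ is produced, and one should double-check that the normalization constant $|B(x_0,1)|$ (or \cite[equation (2.2)]{LU}) does not hide an $x_0$-dependence that would spoil the structural nature of the constants.
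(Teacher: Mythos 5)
Your route is the paper's own: reduce to Corollary \ref{infondo}$(i)$ by bounding $\mathrm{cap}_\H(\Omega^c_k(z_0))$ from below via the comparison with the Gaussian capacity from \cite{LTU} and via \cite[Proposition 2.5]{LTU}, and then, for \eqref{suffcondmeasure}, convert the bound $|B(x_0,\sqrt{T_k})|\le\Lambda\l^{k\log k}$ coming from \eqref{defTkappa} into $T_k\le C\,\l^{\frac{2}{Q}k\log k}$ by doubling. This second half is exactly the paper's argument and is correct (the constant depends on $x_0$ only through $|B(x_0,1)|$, which is harmless since $x_0$ is fixed, and one only needs $T_k\le 1$ for large $k$).

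The gap is in the justification of the key capacity estimate. What is needed, and what the paper proves, is $\mathrm{cap}_\H(\Omega^c_k(z_0))\ge c\,|\Omega^c_k(z_0)|/T_k$ — with no power of $\l$ at all — after which one divides by $\l^{k\log k}$ and invokes Corollary \ref{infondo}$(i)$. You instead assert $\mathrm{cap}_\H(\Omega^c_k(z_0))\gtrsim |\Omega^c_k(z_0)|/\bigl(T_k\l^{k\log k}\bigr)$, obtained by ``inverting'' the chain $|\Omega^c_k(z_0)|\lesssim T_k\cdot\sup_\zeta|B(x_0,\sqrt{t_0-\tau})|$ and by a version of \cite[Proposition 2.5]{LTU} in which the capacity of a space--time compact set is bounded below by its measure divided by the time scale and the spatial ball volume. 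Neither step works: an upper bound on $|\Omega^c_k(z_0)|$ does not invert into a lower bound on capacity, and the asserted inequality is false in general. Indeed, when $\Omega^c_k(z_0)$ is essentially the whole level shell (e.g.\ $\O\subset\{t>t_0\}$ with $z_0\in\de\O$) one has $|\Omega^c_k(z_0)|\approx T_k\l^{k\log k}$ up to structural constants, so your right-hand side is of order $1$, while $\mathrm{cap}_\H(\Omega^c_k(z_0))=\mu_{\Omega^c_k(z_0)}(\Omega^c_k(z_0))\le C\,|B(x_0,r_k)|\approx\l^{k\log k}\to 0$, by the same mass bound from \cite[Corollary 2.4 and Proposition 2.1]{LTU} used in Lemma \ref{sommadeglialtri} and in the necessity part of Theorem \ref{iff}. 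What \cite[Proposition 2.5]{LTU} actually gives is a bound for \emph{fixed-time slices}: $\mathrm{cap}_{a_0}\bigl(\Omega^c_k(z_0)\cap\{\tau=t\}\bigr)\ge c\,|\Omega^c_k(z_0)\cap\{\tau=t\}|$ ($N$-dimensional measure). The missing idea is the time-slicing/averaging step: since $\Omega^c_k(z_0)\subset\RN\times[t_0-T_k,t_0]$, monotonicity of $\mathrm{cap}_{a_0}$ yields $\mathrm{cap}_{a_0}(\Omega^c_k(z_0))\ge\frac{1}{T_k}\int_{t_0-T_k}^{t_0}\mathrm{cap}_{a_0}\bigl(\Omega^c_k(z_0)\cap\{\tau=t\}\bigr)\,{\rm d}t\ge\frac{c}{T_k}\,|\Omega^c_k(z_0)|$ by Fubini, and \cite[Corollary 2.4]{LTU} transfers this to $\mathrm{cap}_\H$; this is precisely where the factor $1/T_k$ comes from. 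With this replacement (and a consistent bookkeeping of the powers of $\l$, which in your write-up oscillate between $\l^{k\log k}$ and $\l^{2k\log k}$), the rest of your argument goes through.
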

\begin{proof}
Recalling the notations we fixed in \eqref{deftkappa}, we know that
$$\Omega^c_k(z_0)\subset \RN\times [t_0-T_k, t_0].$$
Denoting by $\mathrm{cap}_{a_0}$ the capacity with respect the kernel $\G_{a_0}$, by \cite[Corollary 2.4]{LTU} and the monotonicity of  $\mathrm{cap}_{a_0}$ we get
\begin{eqnarray*}
\mathrm{cap}_\H (\Omega^c_k(z_0))&\geq& \frac{1}{c_0}\mathrm{cap}_{a_0}(\Omega^c_k(z_0))=\frac{1}{c_0T_k}\int_{t_0-T_k}^{t_0}\mathrm{cap}_{a_0}(\Omega^c_k(z_0))\,{\rm d}t\\
&\geq& \frac{1}{c_0T_k}\int_{t_0-T_k}^{t_0}\mathrm{cap}_{a_0}\left(\Omega^c_k(z_0)\cap\{\tau=t\}\right)\,{\rm d}t
\end{eqnarray*}
for some positive constant $c_0$. Moreover, we know from \cite[Proposition 2.5]{LTU} that there exists a positive constant $c$ such that $\mathrm{cap}_{a_0}\left(\Omega^c_k(z_0)\cap\{\tau=t\}\right)\geq c |\Omega^c_k(z_0)\cap\{\tau=t\}|$. Hence we have
$$\mathrm{cap}_\H (\Omega^c_k(z_0))\geq \frac{c}{c_0}\frac{1}{T_k}\int_{t_0-T_k}^{t_0}\left|\Omega^c_k(z_0)\cap\{\tau=t\}\right|\,{\rm d}t=\frac{c}{c_0}\frac{\left|\Omega^c_k(z_0)\right|}{T_k},$$
which says that
$$\sum_{k=1}^{\infty}{\frac{\mathrm{cap}_\H (\Omega^c_k(z_0))}{\l^{k\log{k}}}}\geq \frac{c}{c_0} \sum_{k=1}^{\infty}{\frac{|\Omega^c_k(z_0)|}{T_k \l^{k\log{k}}}}.$$
The first statement then follows from the sufficient condition in Corollary \ref{infondo}.\\
On the other hand, having in mind \eqref{defTkappa} and the doubling condition, we have
$$T_k\leq \left(c_d\frac{|B(x_0,\sqrt{T_k})|}{|B(x_0,1)|}\right)^{\frac{2}{Q}}\leq \left(\frac{c_d\L}{|B(x_0,1)|}\right)^{\frac{2}{Q}} \l^{\frac{2}{Q}k\log{k}}$$
at least for $k$ big enough (so that $T_k\leq 1$), which yields
$$\sum_{k}{\frac{|\Omega^c_k(z_0)|}{T_k \l^{k\log{k}}}}\geq \left(\frac{|B(x_0,1)|}{c_d\L}\right)^{\frac{2}{Q}} \sum_{k}{\frac{|\Omega^c_k(z_0)|}{\l^{\frac{Q+2}{Q}k\log{k}}}}$$
and complete the proof of the last part of the statement.
\end{proof}

The regularity criterion in the previous corollary is given in terms of subregions of the complementary set of $\O$ measured at different scales (keep in mind the definition of $\Omega^c_k(z_0)$ in \eqref{defOmk}). This is a recurring feature in potential theory. For example, for the class of operators $\H$ we are considering, it was proved in \cite[Theorem 4.11]{LU} that an exterior parabolic-cone density condition ensures the $\H$-regularity of the boundary point (see also \cite[Theorem 1.4]{LTU} for a $C^\alpha$-regularity result under the same condition). We want to show here that the criterion we have established in Corollary \ref{infondobis} is able, in some cases, to detect the $\H$-regularity of a boundary point in very sharp/subtle situations.\\
To see this, we specialize to the model case of heat operators in Carnot groups. Let us then assume that $\RN$ is endowed with a Carnot group structure $(\RN, \circ, D_\lambda)$, where $\circ$ denotes the group law operation and $D_\lambda$ the family of anisotropic dilations. We denote by $0$ the identity element of the group, and by $x^{-1}$ the inverse element of $x$. Let $X_1, \ldots, X_m$ be left-invariant vector fields which are $D_\lambda$-homogeneous of degree $1$ and form a basis for the first layer of the Lie algebra. We want to consider the following H\"ormander-type operators
\begin{equation}\label{homoper}
\H_0=\sum_{i=1}^{m}X^2_i-\de_t.
\end{equation}
We denote by $Q$ the homogeneous dimension of the group $(\RN, \circ, D_\lambda)$, and by $\delta_\lambda$ the family of dilations in $\R^{N+1}$ defined by $\delta_\lambda(x,t)=(D_\lambda(x), \lambda^2 t)$. It is well-known that $\H_0$ has a global fundamental solution $\Gamma(\cdot, \cdot)$ which is left-invariant and $\delta_\lambda$-homogeneous of degree $-Q$, i.e. $\Gamma((x,t),(\xi,\tau))=\Gamma((\xi^{-1}\circ x,t-\tau), 0)$ and $\Gamma(\delta_\lambda(z),\delta_\lambda(\zeta))=\lambda^{-Q}\Gamma(z,\zeta)$. Moreover, the Gaussian bounds \eqref{bounds} hold for $\Gamma$ with respect to a distance $d(\cdot,\cdot)$ which is left-invariant and $D_\lambda$-homogeneous of degree $1$ (we think such a distance $d$ as fixed in what follows). In particular we have $|B(x_0,r)|=r^Q|B(0,1)|$. We are going to show that a boundary point $z_0=(x_0,t_0)$ of a bounded open set $\Omega\subset\R^{N+1}$ is $\H_0$-regular if the complementary set of $\Omega$ contains the region
$$\left\{(x,t)\in\R^{N+1}\,:\, d^2(x,x_0)\geq C(t_0-t)\log\log\left(\frac{1}{t_0-t}\right),\mbox{ for }t\in \left(t_0-\min\{r_0^2,e^{-1}\},t_0\right)\right\}$$
for some $r_0>0$ and for some small enough positive constant $C$ (small enough in dependence of $Q$ and $b_0$). 
\begin{remark}
Both the presence of the $(\log\log)$-term and the presence of a restriction for the constant $C$ are known to be optimal in the following sense: if the set $\Omega$ is described around its boundary point $(x_0,t_0)$ by $\{|x-x_0|^2<C(t_0-t)\log\log(t_0-t)^{-1}\}$ for some constant $C>\frac{1}{b}>0$, then $(x_0,t_0)$ is irregular for the classical heat operator $\frac{1}{4b}\Delta-\de_t$. For this fact we refer the reader to the discussions in \cite[Section 7]{EK}, as well as to the classical counterexamples by Petrowski in \cite{Pe}. 
\end{remark}
With the following corollary we do not claim to determine the optimal range for $C$, but we do detect the sharp $(\log\log)$-behavior by exploiting the regularity criterion in Corollary \ref{infondobis}. As a matter of fact, we are going to bound from below the series in \eqref{suffcondmeasure} with the divergent series
$$\sum_k \frac{1}{k\log{k}}.$$
It will be clear with the proof that the terms $k\log{k}$ appear exactly because of their role in the definition of $\Omega^c_k(z_0)$ (as the sequence $\alpha(k)$ in \eqref{defalfa}).

\begin{corollary}\label{infondotris}
Let $\H_0$ be as in \eqref{homoper}, and let $d$ be the left-invariant homogeneous distance fixed above. Consider a bounded open set $\Omega$ in $\R^{N+1}$, and $z_0 \in \de\Omega$. There exists a positive constant $C^*=C^*(b_0,Q)$ such that, if we have 
$$\left\{(x,t)\in\R^{N+1}\,:\, d^2(x,x_0)\geq C(t_0-t)\log\log\left(\frac{1}{t_0-t}\right),\mbox{ for }t\in \left(t_0-\min\{r_0^2,e^{-1}\},t_0\right)\right\}\subset\R^{N+1}\smallsetminus\Omega$$
for some $r_0>0$ and $0<C< C^*$, then the point $z_0$ is $\H_0$-regular for $\de\Omega$.
\end{corollary}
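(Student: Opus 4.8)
The plan is to deduce the statement from the geometric sufficient condition \eqref{suffcondmeasure} in Corollary \ref{infondobis}, applied with a value $\lambda\in(0,1)$ to be pinned down at the very end in dependence of $Q$ and $b_0$ only; write $L=\log(1/\lambda)$. The whole matter then reduces to a lower bound for the $(N+1)$-dimensional measure $|\Omega^c_k(z_0)|$ for $k$ large, and the decisive simplification of the present homogeneous setting is that $|B(x_0,r)|=r^Q|B(0,1)|$, which makes the Gaussian bounds \eqref{bounds} completely explicit and invertible.

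So, for $k$ large and $\sigma>0$ small, I would write $\log(1/\sigma)=\frac{2L}{Q}(\alpha(k)+s)$, thinking of $s=s(\sigma)$ as the ``defect'' measuring how much $|B(x_0,\sqrt\sigma)|$ undershoots $\lambda^{\alpha(k)}$. At the time level $t=t_0-\sigma$ the lower bound $\Gamma\ge\frac1\Lambda\G_{b_0}$ shows that $\Gamma(z_0,\cdot)\ge\lambda^{-\alpha(k)}$ is implied by $d(x_0,x)^2\le R_k(\sigma)^2:=\frac{\sigma}{b_0}(Ls-\log(\Lambda|B(0,1)|))$, while the upper bound $\Gamma\le\Lambda\G_{a_0}$ shows that $\Gamma(z_0,\cdot)\le\lambda^{-\alpha(k+1)}$ is implied by $d(x_0,x)^2\ge\frac{\sigma}{a_0}(L(s-\Delta_k)+\log(\Lambda/|B(0,1)|))$, where $\Delta_k:=\alpha(k+1)-\alpha(k)$. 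Since $\Delta_k\to+\infty$, as soon as $s\le\Delta_k-K$ for a suitable absolute constant $K$ this lower radius is simply $0$. Intersecting the $d$-ball $\{d(x_0,x)\le R_k(\sigma)\}$ with the exterior ``$(\log\log)$-paraboloid'' $\{d(x_0,x)^2\ge C(t_0-t)\log\log(1/(t_0-t))\}$, which by hypothesis is contained in $\RNu\smallsetminus\Omega$, one obtains, for $\sigma$ in a suitable window $J_k$, an annular slice $E_k(\sigma)$ lying inside the $k$-th level set; then $A_k:=\bigcup_{\sigma\in J_k}(E_k(\sigma)\times\{t_0-\sigma\})\subseteq\Omega^c_k(z_0)$, so $|\Omega^c_k(z_0)|\ge\int_{J_k}|E_k(\sigma)|\,d\sigma$.

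The window $J_k$ has to be chosen so that, uniformly for $\sigma\in J_k$: $(i)$ $\sigma<\min\{r_0^2,e^{-1}\}$ — automatic for $k$ large; $(ii)$ $\sqrt{C\sigma\log\log(1/\sigma)}\le\frac12 R_k(\sigma)$, so that $E_k(\sigma)$ stays a fat annulus with $|E_k(\sigma)|\ge(1-2^{-Q})|B(0,1)|\,R_k(\sigma)^Q$; $(iii)$ $s(\sigma)\le\Delta_k-K$. On any such window one has $\log\log(1/\sigma)\asymp\log k\asymp\Delta_k$, so $(ii)$ forces $s(\sigma)\gtrsim\frac{Cb_0}{L}\log k$; concretely $J_k$ is the $\sigma$-interval corresponding to $s\in[\frac{8Cb_0}{L}\log k+O(1),\,\Delta_k-K]$, and for $k$ large this interval is non-empty — indeed it contains $[\Delta_k-2K,\Delta_k-K]$ — precisely when $L>8Cb_0$. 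On $[\Delta_k-2K,\Delta_k-K]$ one has $R_k(\sigma)^2\asymp\frac{L\sigma\Delta_k}{b_0}$, and the change of variables $\sigma\mapsto s$ (with $\sigma=\lambda^{\frac2Q(\alpha(k)+s)}$ and $d\sigma\asymp\sigma\,ds$) gives
\[
|\Omega^c_k(z_0)|\ \ge\ |A_k|\ \gtrsim\ \int_{\Delta_k-2K}^{\Delta_k-K}s^{\frac Q2}\,\lambda^{\frac{Q+2}{Q}(\alpha(k)+s)}\,ds\ \gtrsim\ \Delta_k^{\frac Q2}\,\lambda^{\frac{Q+2}{Q}\alpha(k+1)}.
\]
Hence the $k$-th term of the series in \eqref{suffcondmeasure} is $\gtrsim\Delta_k^{Q/2}\lambda^{\frac{Q+2}{Q}\Delta_k}$, and since $\Delta_k=\log k+1+o(1)$ this behaves like $(\log k)^{Q/2}k^{-\frac{Q+2}{Q}L}$; the series diverges if and only if $\frac{Q+2}{Q}L\le1$. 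It therefore suffices to exhibit $\lambda$ with $8Cb_0<L\le\frac{Q}{Q+2}$, which is possible exactly when $C<C^*:=\frac{Q}{8b_0(Q+2)}$ (no optimality claimed), and then the choice $\lambda=e^{-Q/(Q+2)}$ works.

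The point I expect to require the most care is the book-keeping behind the choice of $J_k$ in $(ii)$--$(iii)$: one must guarantee that the annular slice $E_k(\sigma)$ is neither swallowed by the exterior paraboloid nor collapsed to nothing by the inner radius, all while keeping $\log\log(1/\sigma)\asymp\log k\asymp\Delta_k=\alpha(k+1)-\alpha(k)$ uniformly over the window. It is precisely this coupling that makes the borderline divergent comparison series $\sum_k(\log k)^{Q/2}/k$ appear and that ties the admissible size of $C$ to $b_0$ and $Q$ — the superlinear term $k\log k$ in $\alpha(k)$ re-entering the picture, as anticipated in the discussion preceding the corollary. Everything else is a routine consequence of \eqref{bounds} and of the homogeneity identity $|B(x_0,r)|=r^Q|B(0,1)|$.
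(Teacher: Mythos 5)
Your proposal is correct, and at the top level it follows the same reduction as the paper: both arguments verify the measure condition \eqref{suffcondmeasure} of Corollary \ref{infondobis} by bounding $|\Omega^c_k(z_0)|$ from below through the two-sided Gaussian bounds \eqref{bounds} and the homogeneity identity $|B(x_0,r)|=r^Q|B(0,1)|$. The way you carve out a subregion of $\Omega^c_k(z_0)$, however, is genuinely different. The paper keeps the whole time-extent of the $k$-th level shell and intersects it with a $k$-dependent exterior paraboloid $\{d^2(x,x_0)\ge \tfrac{Q}{(Q+2)\rho^2 b_0}(t_0-t)\log\log(\lambda^{-\frac4Q\alpha(k+1)})\}$ (the set $E_k$ in \eqref{tobecontained}); the delicate step there is showing that $E_k$ sits inside the hypothesized exterior region, which is exactly where the constant $C^*=\tfrac{Q}{(Q+4)b_0}$ and the auxiliary parameter $\rho$ enter, and the measure of $E_k$ is then computed by the dilation change of variables, integrating a power of the Gaussian lower bound over $\{d(\xi,0)\ge R_k/\rho\}$; this produces terms $\gtrsim 1/\alpha(k+1)$ for \emph{every} $\lambda\in(0,1)$, i.e.\ the comparison series $\sum 1/(k\log k)$. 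You instead restrict to a thin time window at the bottom of the shell, where the upper level constraint $\Gamma(z_0,\cdot)\le\lambda^{-\alpha(k+1)}$ is automatic, so that membership both in $\Omega^c_k(z_0)$ and in the hypothesized exterior region is immediate; the work shifts to guaranteeing that the radius $R_k(\sigma)$ coming from the lower Gaussian bound dominates the paraboloid radius, which couples $C$ with $L=\log(1/\lambda)$ (your condition $L>8Cb_0$) and forces the borderline choice $L\le\tfrac{Q}{Q+2}$, yielding the comparison series $\sum(\log k)^{Q/2}/k$ and the smaller constant $C^*=\tfrac{Q}{8b_0(Q+2)}$. Since the statement only asks for \emph{some} $C^*(b_0,Q)>0$ and Corollary \ref{infondobis} allows choosing $\lambda$, your conclusion is valid, and the remaining bookkeeping (the constant $K$, the bound $\log\log(1/\sigma)\le 2\log k$ on the window, the change of variables $\sigma\mapsto s$) is routine. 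What the paper's route buys is a better constant and an estimate uniform in $\lambda$; what yours buys is a trivial verification of the containment in $\R^{N+1}\smallsetminus\Omega$ and a more elementary measure computation (a fat annulus on each time slice instead of a Gaussian tail integral).
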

\begin{proof}
We shall prove the statement with $C^*=\frac{1}{b_0}\frac{Q}{Q+4}$, where $Q$ is the homogeneous dimension of $(\RN, \circ, D_\lambda)$ and $b_0$ is the positive exponent in the Gaussian lower bound for $\Gamma$. By translation invariance, we can assume without loss of generality that $x_0=0$, that is $z_0=(0,t_0)$. Thus, for any $x\in\RN$ and $t<t_0$ we have
\begin{equation}\label{homogamma}
\Gamma(z_0,z)=\Gamma(0, (x,t-t_0))=\frac{1}{(t_0-t)^{\frac{Q}{2}}}\Gamma\left(0,\left(D_{\frac{1}{\sqrt{t_0-t}}}(x),-1\right)\right),\qquad\mbox{and}
\end{equation}
\begin{equation}\label{homogauss}
\frac{(t_0-t)^{-\frac{Q}{2}}}{\Lambda|B(0,1)|} e^{-b_0 \frac{d^2(x,0)}{t-t_0}}\leq \Gamma(0, (x,t_0-t))\leq \frac{\Lambda (t_0-t)^{-\frac{Q}{2}}}{|B(0,1)|} e^{-a_0 \frac{d^2(x,0)}{t_0-t}}.
\end{equation}
Fix $\Omega, C, r_0$ as in the assumptions. We also pick $\lambda\in(0,1)$, and we recall our notation $\alpha(k)=k\log k$. We claim the existence of $\rho>1$ and $k_1\in\N$ such that
\begin{eqnarray}\label{tobecontained}
\hspace{1cm}\Omega^c_k(z_0)\supseteq E_k:=\left\{(x,t)\in\R^{N+1}\,:\,\l^{-\alpha(k)}\leq\Gamma(0, (x,t-t_0)) \leq\l^{-\alpha(k+1)},\,\,\right. &&\\ 
\left.d^2(x,0)\geq \frac{Q}{Q+2}
\frac{t_0-t}{\rho^2 b_0}\log\log\left(\lambda^{-\frac{4}{Q}\alpha(k+1)}\right)\right\}&&\mbox{ for all }k\geq k_1.\nonumber
\end{eqnarray}
Let us first complete the proof of the desired statement by giving this claim for granted. We stress that, by \eqref{homogamma} and the homogeneity of $d$, we can write the set $E_k$ as
$$\left\{\left(D_{\sqrt{t_0-t}}(\xi),t\right)\in\R^{N+1} : \l^{\frac{2}{Q}\alpha(k+1)}\Gamma^{\frac{2}{Q}}\left(0,(\xi,-1)\right)\leq t_0-t\leq \l^{\frac{2}{Q}\alpha(k)}\Gamma^{\frac{2}{Q}}\left(0,(\xi,-1)\right),\,\,d(\xi,0)\geq
\frac{R_k}{\rho} \right\}$$
where $R^2_k=\frac{Q}{(Q+2)b_0}\log\log\left(\lambda^{-\frac{4}{Q}\alpha(k+1)}\right)$. Hence, by performing the change of variables $(x,t)\mapsto(\xi,t)$ with $\xi= D_{\frac{1}{\sqrt{t_0-t}}}(x)$, for every $k\geq k_1$ we deduce from \eqref{tobecontained} that
\begin{eqnarray*}
|\Omega^c_k(z_0)|&\geq& \int_{\left\{d(\xi,0)\geq
\frac{R_k}{\rho}\right\}}{\int_{t_0-\l^{\frac{2}{Q}\alpha(k)}\Gamma^{\frac{2}{Q}}\left(0,(\xi,-1)\right)}^{t_0-\l^{\frac{2}{Q}\alpha(k+1)}\Gamma^{\frac{2}{Q}}\left(0,(\xi,-1)\right)}{(t_0-t)^{\frac{Q}{2}}\,{\rm d}t}\,{\rm d}\xi}\\
&=&\frac{2 \l^{\frac{Q+2}{Q}\alpha(k)}}{ Q+2}\left(1-\l^{\frac{Q+2}{Q}(\alpha(k+1)-\alpha(k))}\right)\int_{\left\{\xi\,:\,d(\xi,0)\geq
\frac{R_k}{\rho}\right\}}{ \Gamma^{\frac{Q+2}{Q}}\left(0,(\xi,-1)\right) \,{\rm d}\xi}\\
&\geq& \l^{\frac{Q+2}{Q}\alpha(k)}\frac{2 \left(1- \l^{\frac{Q+2}{Q}\log 4}\right)}{ (Q+2) \left(\Lambda |B(0,1)|\right)^{\frac{Q+2}{Q}}}\int_{\left\{\xi\,:\,d(\xi,0)\geq \frac{R_k}{\rho}\right\}}{ e^{-b_0\frac{Q+2}{Q}d^2(\xi,0)} \,{\rm d}\xi},
\end{eqnarray*}
where in the last inequality we used that $\alpha(k+1)-\alpha(k)\geq \alpha(2)=\log 4$ and the lower bound in \eqref{homogauss}. We now notice, since $|B(0,\rho)|=|B(0,1)|\rho^Q$ and $Q\geq 1$, that
\begin{eqnarray*}
&&\int_{\left\{\xi\,:\,d(\xi,0)\geq \frac{R_k}{\rho}\right\}}{ e^{-b_0\frac{Q+2}{Q}d^2(\xi,0)} \,{\rm d}\xi}=\sum_{j=0}^{\infty}\int_{\left\{R_k\rho^{j-1}\leq d(\xi,0)\leq R_k\rho^j\right\}}{ e^{-b_0\frac{Q+2}{Q}d^2(\xi,0)} \,{\rm d}\xi}\\
&&\geq |B(0,1)|\sum_{j=0}^{\infty} e^{-b_0\frac{Q+2}{Q}\rho^{2j}R_k^2}R_k^Q\rho^{jQ}(1-\rho^{-Q})\geq \frac{|B(0,1)|}{\rho^Q}(1-\rho^{-Q})\sum_{j=0}^{\infty}\int_{R_k\rho^{j}}^{R_k\rho^{j+1}}{e^{-b_0\frac{Q+2}{Q}r^2}r^Q\,{\rm d}r}\\
&&=|B(0,1)|(\rho^{-Q}-\rho^{-2Q})\int_{R_k}^{+\infty}{e^{-b_0\frac{Q+2}{Q}r^2}r^Q\,{\rm d}r}\geq|B(0,1)|(\rho^{-Q}-\rho^{-2Q})\int_{R_k}^{+\infty}{re^{-b_0\frac{Q+2}{Q}r^2}\,{\rm d}r}\\
&&=\frac{Q|B(0,1)|(\rho^{-Q}-\rho^{-2Q})}{2b_0(Q+2)}e^{-b_0\frac{Q+2}{Q}R_k^2}.
\end{eqnarray*}
Therefore, if we put together the last two estimates and we substitute the value of $R_k$, we establish the existence of a positive constant $c_0$ such that
$$
\frac{|\Omega^c_k(z_0)|}{\l^{\frac{Q+2}{Q}\alpha(k)}}\geq c_0 e^{-b_0\frac{Q+2}{Q}R_k^2} = c_0 e^{- \log\log\left(\lambda^{-\frac{4}{Q}\alpha(k+1)}\right)}=\frac{c_0}{\log\left(\lambda^{-\frac{4}{Q}}\right)}\frac{1}{\alpha(k+1)}\quad\forall\, k\geq k_1.
$$
Thus, the series in \eqref{suffcondmeasure} can be estimated from below with the series
$$\frac{c_0}{\log\left(\lambda^{-\frac{4}{Q}}\right)}\sum_{k=k_1}^\infty \frac{1}{\alpha(k+1)}$$
which is divergent since $\alpha(k+1)=(k+1)\log(k+1)$. Corollary \ref{infondobis} yields then the $\H_0$-regularity of the point $z_0$.\\
We are now left with the proof of the claim \eqref{tobecontained}. Recalling the definition \eqref{defOmk} of $\Omega^c_k(z_0)$,
this is the same as showing that there exist $\rho>1$ and $k_1\in\N$ such that $E_k\subseteq \R^{N+1}\smallsetminus\O$ for every $k\geq k_1$.
Then, by the main assumption on the complementary set of $\O$, it is enough to show that 
\begin{equation}\label{toshowinside}
E_k\subseteq  \left\{(x,t)\in\R^{N+1}\,:\, d^2(x,0)\geq C(t_0-t)\log\log\left(\frac{1}{t_0-t}\right)\right\}\cap \left(\RN\times\left(t_0-\min\{r_0^2,e^{-1}\},t_0\right)\right)
\end{equation}
for all $k\geq k_1$. To see this, we keep in mind that $\frac{1}{C}>\frac{1}{C^*}=b_0\frac{Q+4}{Q}=b_0\frac{2}{Q}+b_0\frac{Q+2}{Q}$, and we fix $\rho>1$ through the relation
$$\frac{1}{C}=b_0\frac{2}{Q}+\rho^2b_0\frac{Q+2}{Q}.$$
This implies in particular, using that $e^\sigma\geq 1+\sigma$ for all $\sigma$, that
\begin{equation}\label{primadafare}
e^{\frac{d^2(x,0)}{C (t_0-t)}}\geq e^{\rho^2b_0\frac{Q+2}{Q}\frac{d^2(x,0)}{(t_0-t)}}+b_0\frac{2}{Q}\frac{d^2(x,0)}{t_0-t}\quad\forall\, t<t_0\mbox{ and }x\in\RN.
\end{equation}
Let us also fix $k_1\in\N$ such that
\begin{equation}\label{fissokappuno}
\lambda^{\alpha(k_1+1)}\leq \frac{1}{\Lambda|B(0,1)|}\qquad\mbox{ and }\qquad \lambda^{\alpha(k_1)}< \frac{|B(0,1)|}{\Lambda}\min\left\{r_0^Q, e^{-\frac{Q}{2}}\right\}.
\end{equation}
The first inequality in \eqref{fissokappuno} ensures that
\begin{equation}\label{secondafare}
\log\left(\frac{1}{\lambda^{\frac{4}{Q}\alpha(k+1)}}\right)\geq \frac{2}{Q} \log\left(\frac{\Lambda|B(0,1)|}{\lambda^{\alpha(k+1)}}\right)\quad\forall\, k\geq k_1.
\end{equation}
Moreover, if $z=(x,t)\in E_k$, we have $\Gamma(0, (x,t-t_0))\leq \lambda^{-\alpha(k+1)}$ which implies by \eqref{homogauss}
\begin{equation}\label{terzadafare}
\frac{2}{Q}\log\left(\frac{\Lambda|B(0,1)|e^{b_0\frac{d^2(x,0)}{t_0-t}}}{\lambda^{\alpha(k+1)}}\right)\geq\log\left(\frac{1}{t_0-t}\right).
\end{equation}
If we combine \eqref{primadafare}, \eqref{secondafare} and \eqref{terzadafare}, for all $k\geq k_1$ and for any $z\in E_k$ we get
$$
e^{\frac{d^2(x,0)}{C (t_0-t)}}\geq \log\left(\frac{1}{\lambda^{\frac{4}{Q}\alpha(k+1)}}\right)+b_0\frac{2}{Q}\frac{d^2(x,0)}{t_0-t}\geq \frac{2}{Q}\log\left(\frac{\Lambda|B(0,1)|}{\lambda^{\alpha(k+1)}}\right)+\frac{2}{Q}\log\left(e^{b_0\frac{d^2(x,0)}{t_0-t}}\right)\geq \log\left(\frac{1}{t_0-t}\right),
$$
which says
$$d^2(x,0)\geq C(t_0-t)\log\log\left(\frac{1}{t_0-t}\right)$$
as desired. On the other hand, if $z\in E_k$ we know by \eqref{homogauss} that $\lambda^{-\alpha(k)}\leq \frac{\Lambda}{|B(0,1)| (t_0-t)^{\frac{Q}{2}}}$, and from the second inequality in \eqref{fissokappuno} we then obtain
$$t_0-t\leq \left(\frac{\Lambda \lambda^{\alpha(k)}}{|B(0,1)|}\right)^{\frac{2}{Q}}< \min\{r_0^2,e^{-1}\}\quad\forall\,k\geq k_1.$$
This completes the proof of \eqref{toshowinside}, and the proof of the corollary.
\end{proof}
To fix the ideas, we can say that $(x_0,t_0)$ is $\H_0$-regular for $\de\Omega$ if $\O$ is given by the set
$$\left\{(x,t)\in\R^{N+1}\,:\, d^2(x,x_0)< \frac{Q}{Q+5}\frac{t_0-t}{b_0}\log\log\left(\frac{1}{t_0-t}\right),\, t\in \left(t_0-\frac{1}{2e},t_0\right)\right\}.$$
The geometric condition for the regularity in Corollary \ref{infondotris} appears to be new for the whole class of homogeneous operators $\H_0$ in \eqref{homoper} (except for the classical heat equation in Euclidean $\RN$).
As a straightforward consequence, we can infer that a boundary point $(x_0,t_0)$ of a bounded open set $\Omega$ is $\H_0$-regular if there exist  $M, r_0>0$ such that
$$\left\{(x,t)\in\R^{N+1}\,:\, d^{2}(x,x_0)\geq M(t_0-t),\mbox{ for }t\in (t_0-r_0^2,t_0)\right\}\subset\R^{N+1}\smallsetminus\Omega.$$

\section*{Acknowledgments}

\noindent The authors wish to thank Ermanno Lanconelli for having suggested the problem, and the anonymous referee whose valuable comments led to an improvement of the manuscript. G.T. has been partially supported by the Gruppo Nazionale per l'Analisi Matematica, la Probabilit\`a e le loro Applicazioni (GNAMPA) of the Istituto Nazionale di Alta Matematica (INdAM).

\bibliographystyle{amsplain}

\end{document}